\DeclareMathOperator{\tw}{tw}
\DeclareMathOperator{\bsn}{bsn}
\DeclareMathOperator{\tpw}{tpw}
\newtheorem{theorem}{Theorem}[section]
\newtheorem{lemma}[theorem]{Lemma}
\newtheorem{claim}{Claim}[theorem]
\newtheorem{conjecture}[theorem]{Conjecture}
\theoremstyle{definition}
\newcommand{\Oh}{\mathcal{O}}
\def\Cc{\mathscr{C}}
\def\R{\mathbb{R}}
\def\N{\mathbb{N}}
\def\B{\mathrm{Ball}}
\newcommand{\weight}{\mathsf{w}}
\newcommand{\Aa}{\mathcal{A}}
\newcommand{\Tt}{\mathcal{T}}
\newcommand{\Bb}{\mathcal{B}}
\newcommand{\Dd}{\mathcal{D}}
\newcommand{\wh}[1]{\widehat{#1}}
\newcommand{\bag}{\mathsf{bag}}
\newcommand{\dist}{\mathrm{dist}}
\newcommand{\rad}{\mathrm{rad}}
\newcommand{\ceil}[1]{\left\lceil #1 \right\rceil }
\renewcommand{\leq}{\leqslant}
\renewcommand{\geq}{\geqslant}
\renewcommand{\mid}{~|~}
\newlength{\RoundedBoxWidth}
\newsavebox{\GrayRoundedBox}
\newenvironment{GrayBox}[1]%
   {\setlength{\RoundedBoxWidth}{.93\columnwidth}
    \def\boxheading{#1}
    \begin{lrbox}{\GrayRoundedBox}
       \begin{minipage}{\RoundedBoxWidth}}%
   {   \end{minipage}
    \end{lrbox}
    \begin{center}
    \begin{tikzpicture}%
       \node(Text)[draw=black!20,fill=white,rounded corners,inner sep=2ex,text width=\RoundedBoxWidth]
             {\usebox{\GrayRoundedBox}};
        \coordinate(x) at (current bounding box.north west);
        \node [draw=white,rectangle,inner sep=3pt,anchor=north west,fill=white]
        at ($(x)+(6pt,.75em)$) {\boxheading};
    \end{tikzpicture}
    \end{center}}
\newenvironment{defproblemx}[1]{\noindent\ignorespaces%
                                \FrameSep=6pt%
                                \parindent=0pt%
                \begin{GrayBox}{#1}%
                \begin{tabular*}{\columnwidth}{!{\extracolsep{\fill}}@{\hspace{.1em}} >{\itshape} p{1.5cm} p{0.86\columnwidth} @{}}%
            }{
                \end{tabular*}%
                \end{GrayBox}%
                \ignorespacesafterend
            }
\newenvironment{claimproof}[1][Proof of Claim.]{%
  \begin{proof}[#1]%
}{%
  \end{proof}%
}
\title{On coarse tree decompositions and coarse balanced separators}
\author{Tara Abrishami\thanks{Stanford University (\texttt{tara.abrishami@stanford.edu}). Supported by the National Science Foundation Award Number DMS-2303251 and the Alexander von Humboldt Foundation.}
\and Jadwiga Czyżewska\thanks{University of Warsaw, Poland (\texttt{j.czyzewska@mimuw.edu.pl}).
Supported by Polish National Science Centre SONATA BIS-12 grant number 2022/46/E/ST6/00143.}
\and Kacper Kluk\thanks{University of Warsaw, Poland (\texttt{k.kluk@mimuw.edu.pl}).
Supported by Polish National Science Centre SONATA BIS-12 grant number 2022/46/E/ST6/00143.}
\and Marcin Pilipczuk\thanks{University of Warsaw, Poland (\texttt{m.pilipczuk@mimuw.edu.pl}).
Supported by Polish National Science Centre SONATA BIS-12 grant number 2022/46/E/ST6/00143.}
\and Michał Pilipczuk\thanks{University of Warsaw, Poland  (\texttt{michal.pilipczuk@uw.edu.pl}). Supported by the European Research Council (ERC) under the European Union’s Horizon 2020 research and innovation programme, grant agreement no 948057~(BOBR).}
\and Paweł Rzążewski\thanks{Warsaw University of Technology \& University of Warsaw, Poland (\texttt{pawel.rzazewski@pw.edu.pl}). Supported by the European Research Council (ERC) under the European Union’s Horizon 2020 research and innovation programme, grant agreement no 948057~(BOBR).}}
\begin{document}
\begin{titlepage}
\date{}
\maketitle

\begin{abstract}
It is known that there is a linear dependence between the treewidth of a graph and its {\em{balanced separator number}}: the smallest integer $k$ such that for every weighing of the vertices, the graph admits a balanced separator of size at most $k$. We investigate whether this connection can be lifted to the setting of coarse graph theory, where both the bags of the considered tree decompositions and the considered separators should be coverable by a bounded number of bounded-radius balls.

As the first result, we prove that if an $n$-vertex graph $G$ admits balanced separators coverable by $k$ balls of radius $r$, then $G$ also admits tree decompositions ${\cal T}_1$ and ${\cal T}_2$ such that:
\begin{itemize}[nosep]
    \item in ${\cal T}_1$, every bag can be covered by $\Oh(k\log n)$ balls of radius $r$; and
    \item in ${\cal T}_2$, every bag can be covered by $\Oh(k^2\log k)$ balls of radius $r(\log k+\log\log n+\Oh(1))$.
\end{itemize}
As the second result, we show that if we additionally assume that $G$ has doubling dimension at most $m$, then the functional equivalence between the existence of small balanced separators and of tree decompositions of small width can be fully lifted to the coarse setting. Precisely, we prove that for a positive integer $r$ and a graph $G$ of doubling dimension at most $m$, the following conditions are equivalent, with constants $k_1,k_2,k_3,k_4,\Delta_3,\Delta_4$ depending on each other and on $m$:
\begin{itemize}[nosep]
    \item $G$ admits balanced separators consisting of $k_1$ balls of radius $r$;
    \item $G$ has a tree decomposition with bags coverable by $k_2$ balls of radius $r$;
    \item $G$ has a tree-partition of maximum degree $\leq \Delta_3$ with bags coverable by $k_3$ balls of radius~$r$;
    \item $G$ is quasi-isometric to a graph of maximum degree $\leq \Delta_4$ and tree-partition~width $\leq k_4$.
\end{itemize}
\end{abstract}

\def\thepage{}
\thispagestyle{empty}
 \begin{textblock}{20}(-1.9, 4.6)
  \includegraphics[width=40px]{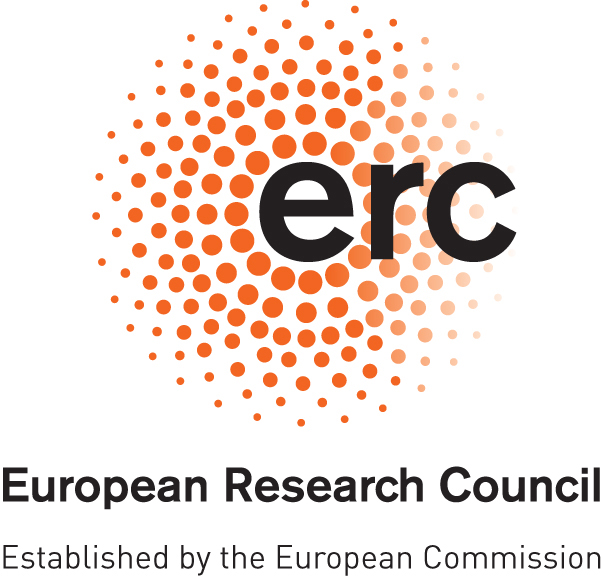}%
 \end{textblock}
 \begin{textblock}{20}(-1.9, 5.5)
  \includegraphics[width=40px]{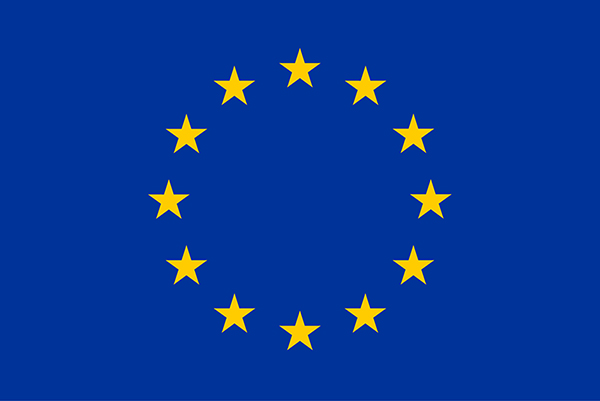}%
 \end{textblock}
\end{titlepage}

\section{Introduction}\label{sec:intro}

The main aim of the area of {\em{coarse graph theory}} is to study the metric structure in graphs. Recently, Georgakopoulous and Papasoglu~\cite{coarse2023} have formulated a programme of understanding coarse counterparts of the fundamental tools, techniques, and results from the classic structural graph theory, particularly the theory of Graph Minors. In the analogy between the classic and the coarse settings, it is typical that the requirement of disjointness of objects is replaced with {\em{farness}}, and the requirement of intersection is replaced with {\em{closeness}}.

Let us illustrate this principle on the example of tree decompositions and treewidth, which will also be the main objects of interest in this work. Classically, in a tree decomposition of a graph $G$ of width $k$ one requires all the bags to consists of at most $k+1$ vertices; thus, the bags are simply bounded in terms of size. A natural coarse counterpart of this condition is to require the following: every bag can be covered by at most $k$ balls of radius $r$ in $G$, for some distance parameter $r\in \N$ fixed beforehand. (We will call such vertex sets {\em{$(k,r)$-coverable}}.) Very recently, Nguyen, Scott, and Seymour~\cite{coarsetw2025}, and independently Hickingbotham~\cite{hickingbotham2025twquasiisom}, studied graphs admitting such tree decompositions and showed that they are quasi-isometric to graphs of bounded treewidth. Here, a {\em{quasi-isometry}} is a mapping between two graphs that roughly preserves distances, which serves as the basic notion of equivalence in the coarse theory; see~\cref{sec:prelim} for a precise definition. The works of Nguyen et al.~\cite{coarsetw2025} and of Hickingbotham~\cite{hickingbotham2025twquasiisom} extend the previously known result of Berger and Seymour~\cite{BergerS24} that admitting a tree decomposition where every bag has bounded diameter, i.e., is $(1,r)$-coverable for a constant $r$, is equivalent to being quasi-isometric to a tree.

In the classic theory, there are a number of notions that are equivalent to treewidth, either exactly or functionally. To name just a few, there is the bramble number, the tangle number, or the largest size of a grid minor; see the survey of Harvey and Wood for an extensive discussion~\cite{HarveyW17}. It is unclear if any of these notions has a suitable coarse counterpart that would be equivalent to ``coarse treewidth'';  the coarse analogue of the Grid Minor Theorem is at this point only a far-reaching conjecture~\cite{coarse2023}. The goal of this work is to explore whether the probably simplest connection between treewidth and another notion --- {\em{balanced separators}} --- can be lifted to the coarse setting.

We need a few definitions. Suppose $G$ is a graph and $\mu\colon V(G)\to \R_{\geq 0}$ is a weight function that assigns each vertex of $G$ a nonnegative weight. We say that a set $X$ of vertices of $G$ is a {\em{balanced separator}} for $\mu$ if for every connected component $C$ of $G-X$, the total weight of vertices within $C$ is at most half of the total weight of $G$. On one hand, it is not hard to see that if $G$ admits a tree decomposition $\Tt$ of width at most $k$, then there is a bag of $\Tt$ that is a balanced separator for $\mu$; hence any weight function $\mu$ admits a balanced separator of size at most $k+1$. On the other hand, using standard approaches to approximating treewidth (see e.g.~\cite[Section~7.6]{platypus}) one can argue that if any weight function $\mu$ on a graph $G$ admits a balanced separator of size $\ell$, then the treewidth of $G$ is at most $3\ell$. Thus, treewidth and the {\em{balanced separator number}} (the smallest $\ell$ such that every weight function admits a balanced separator of size at most $\ell$) are bounded by linear functions of each other; see \cref{lem:bsn}.

There is a natural coarse analogue of balanced separators of bounded size: these would be just separators that are coverable by a bounded number of bounded-radius balls.
Let us remark that the special case that the radius of each ball is 1, i.e., each separator can be covered by a bounded number of neighborhoods of vertices, has received significant attention due to its strong connections to the complexity of certain problems in induced-minor-closed classes of graphs~\cite{GartlandThesis,DBLP:conf/soda/ChudnovskyGHLS25,DBLP:conf/stoc/GartlandLMPPR24,DBLP:conf/focs/GartlandL20}.

As our main motivation, we postulate the following coarse analogue of the connection between treewidth and the balanced separator number.

\begin{conjecture}\label{con:main}
 For all $k,r\in \N$ there exist $\ell,d\in \N$ such that the following holds. Suppose $G$ is a graph such that every weight function $\mu\colon V(G)\to \R_{\geq 0}$ admits a balanced separator that is $(k,r)$-coverable. Then $G$ admits a tree decomposition whose every bag is $(\ell,d)$-coverable.
\end{conjecture}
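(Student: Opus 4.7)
The natural starting point is the classical recursive balanced-separator argument. Given the hypothesis, build a tree decomposition top-down: starting with the uniform weight function $\mu_0 \equiv 1$, find a balanced separator $S_0$ that is $(k,r)$-coverable; declare $S_0$ the root bag; and for each connected component $C$ of $G - S_0$ recurse with the weight function $\mu_C$ supported on $V(C)$. In the most straightforward analysis, the bag at each node is the union of the separators along the root-to-node path. Since the weight of the subproblem halves at every level, the recursion depth is $O(\log n)$ and the bags end up $(O(k \log n), r)$-coverable. This already realizes the tree decomposition $\mathcal{T}_1$ promised by the paper's first main result, but it does not resolve \cref{con:main} because the number of balls grows with $n$.

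To remove the $\log n$ factor and bound bag coverability by constants depending only on $k$ and $r$, the plan is to mimic the classical proof that balanced separator number $\le k$ implies treewidth $O(k)$. In that proof one avoids accumulating all ancestor separators in a bag by carrying along a bounded-size \emph{interesting set} $W$ from parent to child: the bag at each node is $W \cup S$, where $S$ is the separator chosen at that node, and the interesting set passed to each child is a small subset of $W \cup S$ determined by what the child actually ``sees.'' The goal would be to prove, by induction on the recursion, that $W$ is always $(\ell_0, d_0)$-coverable for some constants $\ell_0, d_0$ depending only on $k$ and $r$. Combined with the $(k, r)$-coverability of the separator $S$ at each node, this would give every bag an $(\ell_0 + k, \max\{d_0, r\})$-covering and prove the conjecture.

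The main obstacle, which I expect to be the crux of the matter and which is likely why \cref{con:main} remains open, is maintaining this invariant in the coarse setting. In the child call for a component $C$, the natural definition of the child's interesting set is a subset of $W \cup S$ obtained by restricting to the vertices that remain relevant for $C$, such as those with a neighbor in $C$. Classically, this restriction automatically keeps the interesting set small in cardinality. Coarsely, however, the centers of the balls covering $W \cup S$ may sit far from $V(C)$ or from its boundary, so re-covering the restricted set with few balls of bounded radius is not free; one would instead be forced to allow radii that grow with the recursion depth, bringing back a $\log n$-type dependence in a different guise. This is precisely the geometric pathology that bounded doubling dimension rules out, which is how the paper's second main result circumvents the difficulty; reproducing any analogue of the bounded-hop covering arguments without a dimensional hypothesis seems to require genuinely new input.

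A complementary route worth exploring is to first establish, by softer structural means, that any graph satisfying the hypothesis is quasi-isometric to a graph of bounded treewidth, and then invoke the characterization of Nguyen, Scott, and Seymour~\cite{coarsetw2025} or Hickingbotham~\cite{hickingbotham2025twquasiisom} to extract the desired tree decomposition. This would reduce \cref{con:main} to a purely quasi-isometric statement about the behavior of balanced separators. I expect such an attack to require a coarse analogue of well-linked sets, brambles, or tangles that is compatible with ball-based separators, which at present does not exist in the literature; developing it seems to be the natural first milestone on the way to a full proof of the conjecture.
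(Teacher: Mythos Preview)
The statement you were asked to prove is \cref{con:main}, which the paper explicitly leaves open: the authors write ``We do not resolve \cref{con:main} in this work; in fact, even the resolution of case $r=1$ would be very interesting.'' So there is no proof in the paper to compare your attempt against, and you correctly treat the statement as unresolved rather than manufacturing a bogus argument.

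Your discussion of approaches is accurate and tracks the paper's own partial progress closely. Your first paragraph is exactly the content of \cref{thm:tdecomp-simple} (\cref{lem:tdecomp_step_simple}): recurse on balanced separators, accumulate them along root-to-leaf paths, and obtain $(\Oh(k\log n),r)$-coverable bags. Your second and third paragraphs identify the right obstacle --- that the ``interesting set'' cannot be kept $(O_k(1),r)$-coverable because re-covering after restricting to a child component may force radius growth --- and this is precisely what the paper confronts in \cref{thm:tdecomp}: there the authors do carry a bounded-size interface set through the recursion, but they \emph{allow} the radii to grow, controlling the growth via a potential $\Phi(\Bb)=\sum_{B\in\Bb}2^{\rad(B)/r}$ and a careful ``decrowding'' step (\cref{clm:balls_no_bound}, \cref{clm:no_leaks}) that merges nearby balls. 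This yields radii $r(\log k+\log\log n+\Oh(1))$ rather than a constant, confirming your diagnosis that the classical bounded-interface trick does not go through cleanly. Your final paragraph's quasi-isometry route is also what the paper pursues, but only under bounded doubling dimension (\cref{thm:equivalences}); the paper makes no claim that this can be made to work in general, and your assessment that a coarse bramble/tangle theory would be needed is a reasonable speculation but not something the paper supplies.

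In short: your proposal is not a proof, but it was never going to be one given the current state of the problem, and your analysis of the landscape matches the paper's.
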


The converse implication is easy: if $G$ admits a tree decomposition $\Tt$ whose bags are $(\ell,d)$-coverable and $\mu$ is a weight function on $G$, then again there is a bag of $\Tt$ that is a balanced separator for $\mu$, hence $\mu$ has an $(\ell,d)$-coverable balanced separator.

We do not resolve \cref{con:main} in this work. Our contribution consists of the following:
\begin{itemize}[nosep]
 \item We settle \cref{con:main} under the additional assumption that the graph has {\em{bounded doubling dimension}}: every ball of some radius can be covered by a bounded number of balls of twice smaller radius. This holds even in the following strong sense: $d=r$ and $\ell$ depends only on $k$ and on the doubling dimension.
 \item We prove two weaker statements where either the number of balls to cover every bag, or the radii of the balls, may moderately depend on $n$ --- the vertex count of the graph. Precisely, we prove that the existence of $(k,r)$-coverable balanced separators implies the existence of a tree decomposition with $(\Oh(k\log n),r)$-coverable bags (this is very easy), and also the existence of a tree decomposition with $(\Oh(k^2\log k),r(\log k+\log \log n+\Oh(1)))$-coverable bags (this is quite~involved).
  \item We show that if \cref{con:main} holds for $r=1$, then it holds for any $r$.
 \end{itemize}
We now discuss these statements in more details. In what follows, we say that $G$ has {\em{distance-$r$ balanced separator number}} at most $k$ if every weight function $\mu\colon V(G)\to \R_{\geq 0}$ admits a $(k,r)$-coverable balanced separator.

\paragraph*{Doubling dimension.} We say that a metric space $(X,\delta)$ has {\em{doubling dimension}} at most $m$ if for every $r\in \R_{>0}$, every ball of radius $r$ in $(X,\delta)$ can be covered by $2^m$ balls of radius $r/2$. This definition can be applied to (unweighted) graphs by considering the shortest-path distance metric. The assumption of the boundedness of doubling dimension is well-established in the area of approximation algorithms for metric problems. In a nutshell, it is an abstract property inspired by the setting of Euclidean spaces of fixed dimension, in which multiple natural decompositional techniques can be applied; see e.g. the fundamental work of Talwar~\cite{Talwar04}.
In the context of \cref{con:main}, we prove the following; see \cref{sec:prelim} for undefined terms.

\begin{restatable}{theorem}{thmequivalences}
\label{thm:equivalences}
    Let $\Cc$ be a class of graphs of doubling dimension bounded by $m$, for some $m\in \N$. Then  the following conditions are equivalent for any $r\in \N_{>0}$: 
    \begin{enumerate}[label=(\arabic*),ref=(\arabic*),nosep]
        \item\label{pr:tpw} There exist $k_1,\Delta_1\in \N$ such that every member of $\Cc$ has a tree-partition of spread $r$, maximum degree at most $\Delta_1$, and with $(k_1,r)$-coverable bags.
        \item\label{pr:tw} There exists $k_2\in \N$ such that every member of $\Cc$ has a tree decomposition with $(k_2,r)$-coverable~bags.
        \item\label{pr:bsn} There exists $k_3\in \N$ such that every member of $\Cc$ has distance-$r$ balanced separator number at most $k_3$.
        \item\label{pr:qi} There exist $k_4,\Delta_4\in \N$ such that every member of $\Cc$ is $(3,3r)$-quasi-isometric with an edge-weighted graph of tree-partition width at most $k_4$, maximum degree at most $\Delta_4$, and every edge of weight $3r$.
        \item\label{pr:qia} There exist $k_5,\Delta_5\in \N$ and $\alpha,\beta,\gamma\in \R_{>0}$ such that every member of $\Cc$ is $(\alpha, \beta r)$-quasi-isometric with an edge-weighted graph of tree-partition width at most $k_5$, maximum degree at most $\Delta_5$, and every edge of weight at least $\gamma r$.
    \end{enumerate}
    Moreover, the constants $k_1,k_2,k_3,k_4,k_5,\Delta_1,\Delta_4,\Delta_5, \alpha, \beta, \gamma$ can be bounded by functions of each other and of $m$, but are independent of $r$.
\end{restatable}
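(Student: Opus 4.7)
The plan is to close the cycle \ref{pr:tpw} $\Rightarrow$ \ref{pr:tw} $\Rightarrow$ \ref{pr:bsn} $\Rightarrow$ \ref{pr:qi} $\Rightarrow$ \ref{pr:qia} $\Rightarrow$ \ref{pr:tpw}. Three of the five arrows are essentially free: \ref{pr:tpw} $\Rightarrow$ \ref{pr:tw} because a tree-partition is a tree decomposition with the same bags; \ref{pr:tw} $\Rightarrow$ \ref{pr:bsn} was already noted in the introduction (some bag of any tree decomposition is a balanced separator for any given weight function); and \ref{pr:qi} $\Rightarrow$ \ref{pr:qia} is just the special case $(\alpha,\beta,\gamma) = (3,3,3)$.

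The core of the argument is \ref{pr:bsn} $\Rightarrow$ \ref{pr:qi}, which I would prove by \emph{discretization}. Fix a maximal $r$-separated subset $N \subseteq V(G)$ (an $r$-net) and form the edge-weighted graph $G^\star$ on vertex set $N$, joining $u,v \in N$ by an edge of weight $3r$ whenever $\dist_G(u,v) \leq 3r$. Extending $N \hookrightarrow V(G)$ by mapping each vertex of $G$ to a closest net point realizes a $(3, 3r)$-quasi-isometry between $G$ and $G^\star$. The doubling-dimension hypothesis implies that any ball of radius $3r$ in $G$ meets only $O_m(1)$ vertices of $N$, so $G^\star$ has maximum degree bounded in terms of $m$; similarly any ball of radius $r$ in $G$ hits only $O_m(1)$ net vertices, so a $(k_3, r)$-coverable vertex set in $G$ is ``witnessed'' by $O_m(k_3)$ net vertices. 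Given a weight function $\mu^\star$ on $N$, lift it to a weight function on $V(G)$ by placing each mass $\mu^\star(v)$ on $v$ itself; a $(k_3, r)$-coverable balanced separator in $G$ for this weight function then yields, after being replaced by the $O_m(k_3)$ net vertices close to it, a classical balanced separator for $\mu^\star$ in $G^\star$ of size $O_m(k_3)$. By \cref{lem:bsn} this yields bounded treewidth of $G^\star$, and combined with bounded maximum degree, the classical theorem of Ding and Oporowski gives bounded tree-partition width of $G^\star$, furnishing the graph witnessing \ref{pr:qi}.

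The closing step \ref{pr:qia} $\Rightarrow$ \ref{pr:tpw} is a \emph{pull-back}. Given an $(\alpha, \beta r)$-quasi-isometry $f\colon V(G) \to V(H)$ with $H$ carrying a tree-partition $\{B_t\}_{t \in T}$ of width $\leq k_5$, assign each $u \in V(G)$ to the node $t \in T$ with $f(u) \in B_t$, calling the resulting sets $A_t$. For any $uv \in E(G)$ we have $\dist_H(f(u), f(v)) \leq \alpha + \beta r$, and since every edge of $H$ has weight $\geq \gamma r$, the tree-path in $T$ between $t_u$ and $t_v$ has length at most $C := \lfloor (\alpha + \beta r)/(\gamma r) \rfloor = O_{\alpha,\beta,\gamma}(1)$; passing to a standard ``$C$-coarsening'' of the tree $T$ then absorbs this slack and produces a genuine tree-partition of $G$. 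Each new bag is a bounded union of preimages $f^{-1}(x)$, each of which has $G$-diameter at most $\beta r$ (since $\dist_H(f(u),f(v)) = 0$ forces $\dist_G(u,v) \leq \beta r$) and is therefore $(O_m(1), r)$-coverable by doubling dimension; the number of pieces per bag is controlled by $k_5$ and $\Delta_5^C$, so each bag is $(k_1, r)$-coverable. Bounded maximum degree of the tree-partition follows from $\Delta_5$ and a doubling-dimension count of how many edges of $G$ can cross a single edge of the tree.

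The main obstacle, to my eye, is the quantitative comparison of separators in the core step \ref{pr:bsn} $\Rightarrow$ \ref{pr:qi}: one must certify that \emph{every} weight function on $N$ is realized, up to a bounded factor, by a weight function on $V(G)$ for which the distance-$r$ balanced separator hypothesis applies, and that the resulting balanced separator in $G$ genuinely restricts to a balanced separator of size $O_m(k_3)$ in $G^\star$ for the original weight. Once this push-forward/pull-back of weights and separators is set up carefully, the rest reduces to routine bookkeeping that combines the $r$-net construction under doubling dimension, \cref{lem:bsn}, and the Ding--Oporowski theorem.
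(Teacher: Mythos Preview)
Your plan matches the paper's proof almost exactly: the same cycle \ref{pr:tpw}$\Rightarrow$\ref{pr:tw}$\Rightarrow$\ref{pr:bsn}$\Rightarrow$\ref{pr:qi}$\Rightarrow$\ref{pr:qia}$\Rightarrow$\ref{pr:tpw}, the same $r$-net/distance-graph discretization for the core step (the paper packages this as \cref{clm:bdddim_implies_bddegree,lem:quasi_isometry_to_derived_graph,lem:bdtw} and then invokes \cref{thm:tw-tpw}), and the same pull-back-plus-coarsening for \ref{pr:qia}$\Rightarrow$\ref{pr:tpw}.

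A few small corrections are needed before this is a proof. First, \ref{pr:tpw}$\Rightarrow$\ref{pr:tw} is \emph{not} ``because a tree-partition is a tree decomposition with the same bags'': an edge $uv$ of $G$ with $u,v$ in adjacent but distinct bags of the tree-partition lies in no single bag, so the edge-cover axiom fails. You must subdivide each tree edge $xy$ and put $\bag(x)\cup\bag(y)$ on the new node, exactly as in the paper; this costs a factor $2$ in the cover parameter. Second, in \ref{pr:qia}$\Rightarrow$\ref{pr:tpw} you only argue that \emph{edges} of $G$ have endpoints in nearby $T$-nodes, which yields spread $1$, not the spread $r$ demanded by \ref{pr:tpw}; you need the same estimate for all pairs $u,v$ with $\dist_G(u,v)\le r$, giving $\dist_H(f(u),f(v))\le (\alpha+\beta)r$ and hence $C=\lceil(\alpha+\beta)/\gamma\rceil$. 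Third, $f(u)=f(v)$ forces $\dist_G(u,v)\le \alpha\beta r$, not $\beta r$ (apply the lower quasi-isometry bound), so each fiber is $(\alpha\beta r)$-bounded in diameter before you invoke doubling. Finally, the bound on the maximum degree of the resulting tree-partition does not need any doubling-dimension count on $G$; it comes purely from $\Delta_5$, $k_5$, and the coarsening depth $C$, since the degree of the original tree $T$ is already at most $k_5\Delta_5$. With these fixes your outline is correct and coincides with the paper's argument.
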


We remark that the assertion that the relations between the constants $k_1,k_2,k_3,k_4,k_5,\Delta_1,\Delta_4,\Delta_5,$ $\alpha, \beta, \gamma$ are independent of $r$ is the key aspect of \cref{thm:equivalences}. More precisely, the result holds almost trivially if the dependence on $r$ is allowed: the boundedness of doubling dimension implies the boundedness of maximum degree, and all the conditions above become equivalent to having treewidth bounded by a function of $r$. Instead, \cref{thm:equivalences} postulates that the equivalence holds at every possible choice of the ``scale'' $r$, of which the constants governing the conditions are independent.


The key to the proof of \cref{thm:equivalences} lies in observing that a generic construction of a graph that is quasi-isometric to a given graph $G$ at a given ``scale'' $r$ yields a graph $H$ of degree bounded in terms of the doubling dimension of $G$, regardless of the choice of $r$. (This construction is discussed in \cref{sec:quasi-isometries}.) This allows us to essentially work on a graph of bounded degree, where separators can be conveniently ``fattened'' by including their neighborhoods, and treewidth is functionally equivalent to the parameter {\em{tree-partition width}}~\cite{tree-partitions}. Tree-partition width is defined similarly to treewidth, except that the underlying notion of decomposition --- called a {\em{tree-partition}} --- consists of a tree of bags that form a {\em{partition}} of the vertex set of the graph, and adjacent vertices must lie in the same bag or in adjacent bags. Importantly, tree-partitions have the following ``spreading'' property: if $u$ and $v$ belong to bags that are at least $d$ apart in a tree-partition, then $u$ and $v$ must be also at least $d$ apart in the graph. This property, notoriously lacking in classic tree decompositions, appears very useful in the coarse setting.

More generally, it seems that in coarse graph theory, the assumption of having bounded doubling dimension can be interpreted as the assumption of having bounded degree ``on every possible scale of distances''. Given that several fundamental statements in the theory of induced minors benefit from the assumption of having bounded degree, see~\cite{BonnetHKM23,GartlandKL23,HendreyNST24,Korhonen23}, one could be hopeful that doubling dimension might prove insightful in the coarse theory.

\paragraph*{General setting.} The following statements describe the relaxations of \cref{con:main} that we~prove.

\begin{restatable}{theorem}{thmdecompsimple}
\label{thm:tdecomp-simple}
    Let $G$ be an $n$-vertex graph ($n\geq 2$) whose distance-$r$ balanced separator number is at most $k\geq 2$, for some positive integer $r$. Then $G$ has a tree decomposition whose every bag is $(k (\log n+2), r)$-coverable.
\end{restatable}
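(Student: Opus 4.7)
The plan is to run a classical recursive balanced-separator decomposition, adapted carefully so that the separator chosen at each recursive step lies inside the current piece. The recursion will have depth at most $\lceil \log_2 n \rceil$ because pieces halve, and each bag of the resulting tree decomposition will be the union of all separators encountered on the root-to-node path; since each separator is $(k,r)$-coverable, the resulting bags will be $(k(\log n + 2), r)$-coverable.

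Concretely, I would build a rooted tree $T$ and assign to each node $t$ a piece $P_t \subseteq V(G)$ and a separator $S_t \subseteq P_t$ as follows. Set $P_{t_0} := V(G)$ at the root $t_0$. At a node $t$ with $|P_t| \ge 2$, apply the hypothesis to the weight function $\mu_{P_t}$ that assigns $1$ to every vertex of $P_t$ and $0$ elsewhere, obtaining a $(k,r)$-coverable set $\widehat S_t \subseteq V(G)$ such that every connected component of $G - \widehat S_t$ contains at most $|P_t|/2$ vertices of $P_t$. Define $S_t := \widehat S_t \cap P_t$, and create one child $t_C$ of $t$ for each connected component $C$ of $G[P_t \setminus S_t]$, with $P_{t_C} := V(C)$. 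At a node $t$ with $|P_t| = 1$, set $S_t := P_t$ and declare $t$ a leaf. Finally, define $\bag(t)$ to be the union of all $S_s$ with $s$ on the root-to-$t$ path (including $t$ itself).

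The central technical point is that restricting $\widehat S_t$ to $P_t$ preserves the halving property: a connected component $C'$ of $G[P_t \setminus S_t]$ is also connected in $G - \widehat S_t$ (since $P_t \setminus S_t = P_t \setminus \widehat S_t$ and every edge of $G[P_t \setminus S_t]$ is an edge of $G - \widehat S_t$), so $V(C')$ is contained in a connected component of $G - \widehat S_t$ and hence has at most $|P_t|/2$ vertices. Thus pieces strictly halve, $T$ has depth at most $\lceil \log_2 n \rceil$, and $S_t \subseteq \widehat S_t$ remains $(k,r)$-coverable.

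It remains to verify the tree-decomposition axioms. Vertex coverage is immediate: tracing any $v$ down the tree, the piece size strictly halves each step, so $v$ eventually lies in a leaf with $P_t = \{v\} = S_t$, giving $v \in \bag(t)$. For edge coverage, if $u,v \in P_t \setminus S_t$ with $uv \in E(G)$, then $uv$ is an edge of $G[P_t \setminus S_t]$ and so $u$ and $v$ end up in the same child; hence they can only separate once one of them, say $u$, enters some $S_t$, after which $v$ is covered at a descendant $s$ whose bag, containing the full root-to-$s$ chain of separators, also contains $u$. Connectivity of the bags containing a fixed vertex $v$ is where the restriction $S_t \subseteq P_t$ is crucial: sibling pieces are disjoint (as distinct components of $G[P_t \setminus S_t]$) and descendant pieces lie inside $P_t \setminus S_t$, so $v$ belongs to $S_s$ for at most one node $s$, and the set of bags containing $v$ is precisely the subtree rooted at this unique $s$. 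The main obstacle, and the only subtle point, will be this observation that the balanced separator can be safely restricted to the current piece without breaking the halving property; after that, everything reduces to routine bookkeeping together with the $\lceil \log_2 n \rceil$ depth bound, which gives each bag $(k(\lceil \log_2 n \rceil + 1), r)$-coverable and hence $(k(\log n + 2), r)$-coverable.
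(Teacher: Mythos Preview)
Your proposal is correct and follows essentially the same approach as the paper: both run a recursive balanced-separator decomposition of depth $\leq \lceil \log n\rceil$, restrict each separator to the current piece (your $S_t=\widehat S_t\cap P_t$ corresponds to the paper's $Z\cap U$), and define each bag as the union of the accumulated separators along the root-to-node path. The paper packages the argument as an inductive lemma on pairs $(S,U)$ while you build the tree explicitly top-down, but the construction and the key observation that restricting the separator to the current piece preserves halving are identical.
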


\vspace{-0.3cm}

\begin{restatable}{theorem}{thmdecomp}
\label{thm:tdecomp}
    There is a constant $c\in \N$ such that the following holds. Let $G$ be an $n$-vertex graph ($n\geq 2$) whose distance-$r$ balanced separator number is at most $k\geq 2$, for some positive integer $r$. Then $G$ has a tree decomposition whose every bag is $(c k^2\log k, r(\log k +\log\log n +c))$-coverable.
\end{restatable}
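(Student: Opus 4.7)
The plan is to refine the recursive balanced-separator construction underlying Theorem~\ref{thm:tdecomp-simple}, where the naive accumulation of $k$ new radius-$r$ balls at each of the $\log n$ recursion levels yields $k\log n$ balls per bag. To save the $\log n / (\log k + \log\log n)$ factor in the ball count we trade radius-$r$ balls against balls of the larger radius $R := r(\log k + \log\log n + O(1))$ by amortizing across groups of consecutive recursion levels, rather than treating each level as an independent contribution.

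Concretely, I would set $L := \lceil \log k + \log \log n + c \rceil$ for a suitable constant $c$ (so that $R = rL$), and organize the recursion in rounds of $L$ levels each. Throughout the recursion the algorithm maintains a \emph{compressed} terminal set $T_t$ at each node $t$, represented as a union of at most $O(k^2\log k)$ balls of radius $R$. A round proceeds as in the standard construction: at every node we find a $(k,r)$-coverable balanced separator with respect to the indicator weight function of the current piece, place it in the bag, and recurse on the components of the piece after removing the separator. The new ingredient is that only at the \emph{end} of a round do we return to $T_t$: the $kL$ newly introduced radius-$r$ balls collected during the round are then compressed into $O(k)$ radius-$R$ balls, restoring the invariant. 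Summing over the $\log n / L$ rounds of the entire recursion accounts for the $O(k^2\log k)$ bound on the number of radius-$R$ balls in any bag.

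The technical heart of the argument is the \emph{round compression lemma}: the $kL$ radius-$r$ balls produced by one $L$-level round of separators can be covered by $O(k)$ balls of radius $R$. For this I would exploit that, by the end of an $L$-level round, the current piece $P$ has weight at most $n/2^L \leq n/(k\log n)$ and in particular contains at most $k$ non-trivial components of sufficient weight. By invoking the distance-$r$ balanced separator property on a \emph{new} weight function concentrated on $P$ and its round-separators, one obtains $k$ anchor balls of radius $r$ whose $R$-inflations should capture all $kL$ round-separators; the key quantitative bridge is that every vertex contributed to a separator of the round has a path of length $\leq L\cdot r = R$ inside $P$ to one of the anchor balls, exactly because the round has depth $L$ and separators chain through components of $P$.

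The main obstacle is making this compression lemma actually provable: a~priori, the $k$ radius-$r$ balls composing a single balanced separator can be scattered arbitrarily across~$G$, and there is no automatic reason for the $kL$ balls of a round to cluster near a bounded set of anchors. I expect the argument to require choosing the separators of a round \emph{adaptively} (rather than as arbitrary $(k,r)$-coverable balanced separators), e.g.\ always preferring separators whose ball centres lie close to those of the current round-so-far, or alternatively preprocessing the graph by a net hierarchy at scales $r,2r,4r,\ldots,R$ so that separator centres can be snapped to net points and a pigeonhole argument on net cells controls the compression ratio. Verifying that such adaptive separators both exist and remain sufficiently balanced, and then cleanly executing the amortization above, is the crux of the proof and is where the factor $k^2\log k$ (rather than merely $k\log k$) should ultimately arise.
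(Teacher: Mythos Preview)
Your proposal does not yet contain a proof: the ``round compression lemma'' is the entire content of the theorem, and you correctly identify it as the main obstacle without supplying a mechanism that would make it true. The chaining heuristic you sketch --- that separator vertices from level $i+1$ of a round lie within distance $r$ of those from level $i$, so after $L$ levels everything is within $Lr=R$ of a small anchor set --- is simply false: a balanced separator for a component $C$ is chosen only with respect to weight and need have no proximity whatsoever to the separator that cut out $C$. Neither adaptive separator choice nor snapping to a net hierarchy fixes this, because the distance-$r$ balanced separator hypothesis gives you no control over \emph{where} the $k$ balls sit, only that $k$ of them suffice; you cannot force them to cluster.

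The paper's argument is structurally quite different and supplies the missing idea. Rather than grouping levels into rounds and compressing at round boundaries, it maintains at every recursive call a cover $\Bb$ of the interface $S$ by at most $\Gamma=\Theta(k^2\log k)$ balls of \emph{varying} radii (all multiples of $r$), and tracks the potential $\Phi(\Bb)=\sum_{B\in\Bb}2^{\rad(B)/r}$. Two devices keep $|\Bb|\leq\Gamma$ without blowing up $\Phi$. First, whenever many balls of the same radius crowd near a single vertex, they are merged into one larger ball; this never increases $\Phi$ and ensures any vertex is near only $O(k\log k)$ balls of $\Bb$. Second --- and this is the idea you are missing --- at each step the algorithm takes a balanced separator not only for the current piece $U$ but also for the weight function supported on the \emph{centres} of the balls in $\Bb$. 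Hence each child component $W$ contains at most $|\Bb|/2$ centres, so only half of $\Bb$ is inherited directly; the balls with centres outside $W$ either sit near the new separator (a bounded number, by the anti-crowding) or can be absorbed by inflating the $2k$ new separator balls to the largest ``lost'' radius, paid for in potential by the ball that was dropped. The upshot is that $|\Bb|$ stays bounded and $\Phi$ grows by at most $4k$ per level, so after $\log n$ levels the maximum radius satisfies $2^{R/r}\leq O(k\log n)$, i.e.\ $R\leq r(\log k+\log\log n+O(1))$. The $k^2\log k$ arises from balancing the halved inherited balls against the $O(k\cdot k\log k)$ near-separator balls, not from any round-based accounting.
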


As mentioned, \cref{thm:tdecomp-simple} is very simple: we decompose the graph in a recursive way, always breaking the set of remaining vertices using a balanced separator. This yields a recursion of depth bounded by $\log n$, and the bags of the resulting tree decomposition are obtained by as the union of the balanced separators accumulated along every branch of the recursion.

\cref{thm:tdecomp} is more involved. On a high level, we emulate a recursive algorithm from the classic proof of the connection between treewidth and balanced separators. In this algorithm, the part of a graph that is left to decompose is separated from the rest of the graph by a separator $S$, which classically is bounded in size. In our recursion, we keep the invariant that $S$ can be covered by $\Oh(k^2\log k)$ balls, but for technical reasons we need to allow the radii of those balls to slowly grow in consecutive calls. To control the growth of the balls, we apply careful bookkeeping using a potential function that grows exponentially with the radius.

In \cref{sec:one_rules} we show that if \cref{con:main} holds for $r=1$, then it holds for any $r>0$. We again use here a generic construction of a graph $H$ that is quasi-isometric to a given graph $G$ and show how to transform a tree decomposition of $H$ into a tree decomposition of $G$.

\section{Preliminaries}\label{sec:prelim}

By $\N,\N_{>0},\R_{\geq 0},\R_{>0}$ we denote the sets of nonnegative integers, positive integers, nonnegative reals, and positive reals, respectively. All logarithms are base-$2$.

\paragraph*{Graphs.} We use standard graph notation. All graphs considered in this paper are finite, simple, and unweighted, unless explicitly stated.
The set of vertices of graph $G$ is denoted as $V(G)$ and the set of edges as $E(G)$. The maximum degree of a graph $G$ is denoted as $\Delta(G)$.

 The distance metric of a graph $G$ is denoted by $\dist_G(\cdot,\cdot)$: for two vertices $u,v\in V(G)$, their distance, denoted by $\dist_G(u,v)$, is the length (i.e., the number of edges) of a shortest path connecting $u$ and $v$, or $+\infty$ if no such path exists. For a vertex $u$ and a nonnegative real $r$, the {\em{radius-$r$ ball}} around $u$ is the set $\B_G(u,r)\coloneqq \{v\in V(G)\mid \dist_G(u,v)\leq r\}$. We say that a set of vertices $A$ in a graph $G$  is {\em{$(k,r)$-coverable}} if one can choose $k$ radius-$r$ balls in $G$ whose union contains $A$. We emphasize that the distances here are measured in $G$ and the balls do not have to be contained in $A$.
 A {\em{distance-$r$ independent set}} in $G$ is a set of vertices $I\subseteq V(G)$ such that for all distinct $u,v\in I$, we have $\dist_G(u,v)>r$.

An {\em{edge-weighted graph}} is a graph $G$ equipped with a weight function $\weight_G\colon E(G)\to \mathbb{R}_{\geq 0}$ on edges. We can naturally lift the notation for distances, balls, etc. to edge-weighted graphs by considering the length of a path to be the sum of the weights of its edges. 

In all of the notation above, we may omit the graph in the subscript if it is clear from the context.


\paragraph*{Graph decompositions.} A \emph{tree decomposition} of a graph $G$ is a pair $\mathcal{T}=(T,\bag)$, where $T$ is a tree and $\bag$ is a function assigning every node $x$ a subset $\bag(x)$  of vertices of $G$ (called the {\it bag} of $x$) such that following conditions are fulfilled:
\begin{itemize}[nosep]
    \item for each edge $vu\in E(G)$, there is a node $x$ of $T$ such that $\bag(x)$ contains both $u$ and $v$; and 
    \item for each vertex $u$ of $G$, the set of nodes of $T$ whose bags contain $u$  induces a connected non-empty subtree of $T$.
\end{itemize}
The \emph{width} of a tree decomposition is equal to $\max_{x\in V(T)}|\bag(x)|-1$. The minimum width over all tree decompositions of a graph $G$ is called the \emph{treewidth} of $G$ and is denoted as $\tw(G)$.

A \emph{tree-partition} of a graph is a similar notion to a tree decomposition, with the main difference that here the bags should form a partition of the vertex set. A \emph{tree-partition} of a graph $G$ is again a pair $\mathcal{T}=(T,\bag)$, where $T$ is a tree and $\bag$ is a function mapping nodes of $T$ to subsets of vertices of $G$ (called {\em{bags}}). This time, we require the following properties:
\begin{itemize}[nosep]
    \item each vertex $u$ of $G$ belongs to exactly one set $\bag(x)$ for some $x\in V(T)$; and
    \item for every edge $uv\in E(G)$ either there exists $x\in V(T)$ such that $u,v\in \bag(x)$ or there exists an edge $xy\in E(T)$ such that $u\in \bag(x)$ and $v\in\bag(y)$.
\end{itemize}
The \emph{width} of a tree-partition decomposition is the size of the largest bag. The \emph{tree-partition width} of a graph $G$ is the minimum width over all tree-partitions of $G$ and is denoted as $\tpw(G)$.

It is straightforward to see that $\tw(G)\leq 2\tpw(G)-1$ for every graph $G$: given a tree-partition decomposition $(T,\bag)$ of $G$ of width $k$, we may obtain a tree decomposition of width at most $2k-1$ by subdividing every edge of $T$ once and assigning the vertex subdividing any edge, say $xy$, the bag $\bag(x)\cup \bag(y)$. While there is no relation between the two parameters in the other direction (consider a long path with a universal vertex added), they turn out to be functionally equivalent assuming the graph in question has bounded maximum degree. The following result with a worse bound was observed by an anonymous reviewer and reported in the work of Ding and Oporowski~\cite{tree-partitions}; the improved bound is due to Wood~\cite{Wood09}.

\begin{theorem}[\cite{tree-partitions,Wood09}]\label{thm:tw-tpw}
    For every graph $G$, it holds that $\tpw(G)\leq \frac{35}{4}\Delta(G)(\tw(G)+1)$.
\end{theorem}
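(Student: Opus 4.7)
The plan is to prove the theorem by induction on $|V(G)|$, constructing the tree-partition recursively via the balanced separator property of tree decompositions. To make the induction go through I would strengthen the statement to a \emph{rooted} variant: given a graph $G$ with $\tw(G)\leq k$ and $\Delta(G)\leq \Delta$, together with a designated ``boundary'' set $S\subseteq V(G)$ whose size is bounded in terms of $\Delta(k+1)$, there exists a tree-partition of $G$ with all of $S$ contained in its root bag and every bag of size at most $c\Delta(k+1)$ for a suitable absolute constant $c$.

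For the inductive step I would take an optimal tree decomposition $(T,\bag)$ of $G$ of width $k$ and, using the standard centroid argument, locate a node $t$ whose bag is a balanced separator with respect to a carefully chosen vertex weighting. The root bag of the tree-partition would be $S \cup \bag(t)$, and I would recurse on each connected component $C$ of $G - (S \cup \bag(t))$, passing as the new boundary set $S_C$ the vertices of $C$ with a neighbor in $S \cup \bag(t)$; the recursively produced tree-partitions are then attached as subtrees rooted at children of the root node. Both tree-partition axioms are easy to verify: bag disjointness holds since vertices placed in the root are removed before recursing, while every edge from a vertex of $C$ to a vertex of $S \cup \bag(t)$ has its $C$-endpoint in $S_C$, hence in a bag adjacent to the root. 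Moreover, $\sum_C |S_C| \leq \Delta \cdot |S \cup \bag(t)|$ since every vertex in the root has at most $\Delta$ neighbors outside, so the total ``boundary budget'' is distributed among the children in a controlled way.

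The main obstacle is pinning down the constant to exactly $\tfrac{35}{4}$. A naive unrolling of the above recursion yields a bound linear in $\Delta(k+1)$ but with a larger leading constant; shrinking it to $\tfrac{35}{4}$ requires a careful choice of the weight function used to pick each separator — weighting $S$-vertices against non-$S$-vertices in just the right proportion so that the size of $|S \cup \bag(t)|$, and therefore of the new boundaries $|S_C|$, remains inside the inductive envelope — together with a tight amortization argument across levels of the recursion. This refinement is essentially Wood's contribution over the original Ding--Oporowski approach, and I would expect the optimal constant to emerge as the solution of a small linear optimization balancing the contribution of $\bag(t)$ against the inherited boundary $S$.
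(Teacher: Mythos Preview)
The paper does not contain a proof of this statement at all: \cref{thm:tw-tpw} is stated with an attribution to Ding--Oporowski~\cite{tree-partitions} and Wood~\cite{Wood09} and is used as a black box in the proof of \cref{thm:equivalences}. So there is nothing in the paper to compare your proposal against.

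That said, your outlined strategy is indeed the shape of the argument in the cited works: one proves a rooted strengthening by induction, using a balanced-separator bag of an optimal tree decomposition as (part of) the root bag, recursing on components with the neighborhood-in-the-root as the new boundary, and controlling $\sum_C |S_C|$ by $\Delta\cdot|S\cup\bag(t)|$. You are also right that the gap between ``some linear bound'' and the specific constant $\tfrac{35}{4}$ is the nontrivial part, and that Wood's contribution is a careful choice of weights and invariants that makes the recursion close up with exactly that constant. Your proposal is honest about not having carried out this optimization; as written it is a correct plan for a bound of the form $O(\Delta(G)(\tw(G)+1))$, but not yet a proof of the stated inequality with constant $\tfrac{35}{4}$. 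If you actually need the precise constant, you will have to reproduce Wood's bookkeeping rather than gesture at ``a small linear optimization''.
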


By the maximum degree of a tree-partition $(T,\bag)$ we mean the maximum degree of $T$.
In the context of coarse graph theory, the following parameter of a tree-partition also seems insightful: For $r\in \N_{>0}$, we say that a tree-partition $(T,\bag)$ of a graph $G$ has {\em{spread}} $r$ if for every pair of vertices $u,v$ width $\dist_G(u,v)\leq r$, there is either a node $x\in V(T)$ with $u,v\in \bag(x)$ or an edge $xy\in E(T)$ with $u\in \bag(x)$ and $v\in \bag(y)$. Thus, the standard condition in the definition of tree-partitions is equivalent to having spread $1$, but intuitively, tree-partitions with larger spread break the graph into pieces that are further apart. The tree-partitions that we will construct within the proof of \cref{thm:equivalences} (the formalization of \cref{thm:equivalences}) will have spread $r$, rather than $1$. We note that in a follow-up work~\cite{HatzelP25}, Hatzel and the fifth author investigate some algorithmic aspects of tree-partitions with spread $r$ and $(k,r)$-coverable bags.

\paragraph*{Balanced separators.} Let $G$ be a graph and $\mu\colon V(G)\to \R_{\geq 0}$ be a weight function on the vertices of $G$. For a subset of vertices $A$ we denote $\mu(A)\coloneqq \sum_{u\in A} \mu(u)$, and for a subgraph $H$ we write $\mu(H)\coloneqq \mu(V(H))$.

We say that a vertex set $X\subseteq V(G)$ is a {\em{balanced separator}} for $\mu$ if for every connected component $C$ of $G-X$, we have $\mu(C)\leq \frac{1}{2}\mu(V(G))$. The {\em{balanced separator number}} of $G$, denoted $\bsn(G)$, is the smallest $k$ such that for every weight function $\mu$ there is a balanced separator of size at most $k$. We have the following standard lemma that connects the treewidth with the balanced separator number. 


\begin{lemma}[{see e.g.~\cite[Section~5]{HarveyW17}}]\label{lem:bsn}
 For any graph $G$, we have
 \[\bsn(G)-1 \leq \tw(G)\leq 3\bsn(G).\]
\end{lemma}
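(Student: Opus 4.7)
The plan is to prove the two inequalities separately.

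The first inequality $\bsn(G) \leq \tw(G) + 1$ follows from a direct ``walk in the tree'' argument on any optimal tree decomposition $(T,\bag)$ of $G$. I would root $T$ arbitrarily, and for each node $x$ let $V_x$ denote the union of all bags in the subtree rooted at $x$. Given a weight function $\mu\colon V(G)\to\R_{\geq 0}$, start at the root (where $V_r = V(G)$) and walk down $T$ by repeatedly moving to a child $y$ whenever $\mu(V_y) > \tfrac{1}{2}\mu(V(G))$; terminate at the first node $x$ admitting no such child. By the standard separation property of tree decompositions, every connected component $C$ of $G - \bag(x)$ lies either in $V_y\setminus \bag(x)$ for some child $y$ of $x$, or in $V(G)\setminus V_x$. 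In the first case, the stopping rule gives $\mu(V(C)) \leq \mu(V_y) \leq \tfrac{1}{2}\mu(V(G))$; in the second, the fact that the walk descended to $x$ (or $x$ is the root) yields $\mu(V_x) > \tfrac{1}{2}\mu(V(G))$, and hence $\mu(V(C)) \leq \mu(V(G)) - \mu(V_x) < \tfrac{1}{2}\mu(V(G))$. Therefore $\bag(x)$, a set of size at most $\tw(G) + 1$, is a balanced separator for $\mu$.

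For the second inequality $\tw(G) \leq 3\bsn(G)$, set $k = \bsn(G)$ and mirror the recursive construction underlying the standard treewidth approximation algorithm (see~\cite[Section~7.6]{platypus}). The recursion maintains a pair $(A,S)$ with $S\subseteq V(G)$ a ``boundary'' of bounded size and $A\subseteq V(G)\setminus S$ a set satisfying $N_G(A)\subseteq S$, and it outputs a tree decomposition of $G[A\cup S]$ with all bags of size at most $3k+1$ and with $S$ contained in a designated root bag. In the base case $|A|$ is small enough that we take $A\cup S$ as a single bag. Otherwise, apply $\bsn(G)\leq k$ to the weight function $\mu$ that is $1$ on $A$ and $0$ elsewhere; this yields a separator $X\subseteq V(G)$ of size at most $k$ such that every connected component of $G - X$ contains at most $|A|/2$ vertices of $A$. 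Setting $S\cup(X\cap A)$ as the new root bag and recursing on each connected component of $G[A]\setminus X$, with its new boundary drawn from $S\cup(X\cap A)$, produces the desired tree decomposition. Since $|A|$ strictly halves at each level, the recursion terminates after $\Oh(\log|V(G)|)$ rounds.

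The main obstacle is to control the size of the boundary $S$ in recursive calls so that the bag $S\cup(X\cap A)$ stays within the $3k+1$ budget at every level. The natural choice $N_G(V(C)) \cap (S\cup(X\cap A))$ could in principle have as many as $|S| + |X\cap A|$ vertices, and an incautious recursion would blow up the constant. Carefully chosen invariants --- detailed in the cited sources --- ensure that in each recursive call the boundary remains of size at most $2k$, so the root bag always has size at most $3k$. Concatenating the outputs of all recursive calls along the shared root bags then yields a tree decomposition of $G$ of width at most $3k$, establishing the claimed bound.
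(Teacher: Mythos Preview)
The paper does not prove this lemma --- it is cited as a known fact from~\cite{HarveyW17} --- so there is no in-paper argument to compare against. Your proof of $\bsn(G)\leq \tw(G)+1$ is correct and standard; essentially the same walk/orientation argument is spelled out later in the paper, in the proof of implication \ref{pr:tw}$\Rightarrow$\ref{pr:bsn} of \cref{thm:equivalences}.

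For $\tw(G)\leq 3\bsn(G)$, your sketch has the right architecture but a real gap precisely at the point you yourself flag. The weight function you actually specify --- uniform on $A$ --- buys termination but does nothing to bound the boundary: a separator balancing $A$ may leave all of $S$ on one side, so the new boundary for that child can be as large as $|S|+k$, and the recursion blows up. The standard remedy (and the one in the sources you cite) is to put the weight on $S$ rather than on $A$: then every component of $G-X$ carries at most $|S|/2$ vertices of $S$, and the new boundary has size at most $|S|/2+k\leq 2k$ under the invariant $|S|\leq 2k$. Termination is handled by a separate small device (e.g.\ padding $S$ with a vertex of $A$ when $|S|$ is below $2k$, or taking a second balanced separator for $A$ --- compare the two weight functions $\mu_U,\mu_O$ in the paper's \cref{lem:tdecomp_step}). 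Writing ``carefully chosen invariants detailed in the cited sources'' at this exact point, after having committed to the wrong weight function, leaves the argument incomplete as written.
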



As mentioned in \cref{sec:intro}, we will work with the following coarse variant of the balanced separator number. For $r\in \R_{\geq 0}$, the {\em{distance-$r$ balanced separator number}} of $G$ is the smallest $k$ such that for every weight function $\mu\colon V(G)\to \R_{\geq 0}$, there is balanced separator for $\mu$ that is $(k,r)$-coverable.

\paragraph*{Doubling dimension.} 
The \emph{doubling dimension} of a graph $G$ is the smallest $m$ such that for every nonnegative real $r$, every radius-$2r$ ball in $G$ can be covered by $2^m$ radius-$r$ balls. Note that by applying this condition to $r=\frac{1}{2}$ we may conclude that if a graph $G$ has doubling dimension $m$, then $\Delta(G)<2^m$.



\paragraph*{Quasi-isometries.} 
Let $G$ and $H$ be (possibly edge-weighted) graphs and $\alpha\geq 1,\beta\geq 0$ be reals. A mapping $\varphi\colon V(G)\to V(H)$ is called an \emph{$(\alpha, \beta)
$-quasi-isometry} if it satisfies the following properties: 
\begin{itemize}[nosep]
    \item for every pair of vertices $u, v \in V(G)$, it holds that \[\frac{1}{\alpha}\cdot \dist_G(u,v) - \beta \leq \dist_H(\varphi(u), \varphi(v))\leq \alpha \cdot \dist_G(u,v) + \beta; \ \text{ and} \]
    \item for every $w \in V(H)$ there is $u \in V(G)$ such that $\dist_H(w, \varphi(u)) \leq \beta$.
\end{itemize}
If such a mapping exists, we will also say that $G$ is {\em{$(\alpha,\beta)$-quasi-isometric}} with $H$.

\section{Distance graphs}\label{sec:quasi-isometries}
 In this section we describe a generic construction of a quasi-isometry between a graph and its ``coarsening'' with respect to some magnitude of distances. The construction can be considered folklore, see e.g.~\cite[Observation 2.1]{coarse2023} and further references mentioned there, but as we will later use its specific properties, we describe it in details. 

 Let $G$ be a graph, $r,\sigma \in \N_{>0}$.
 Suppose $I$ is an inclusion-maximal distance-$r$ independent set in~$G$.
 The {\em{$(I,r,\sigma)$-distance graph}} of $G$ is the edge-weighted graph $H=H(G,I,r,\sigma)$ defined as follows:
 \begin{itemize}[nosep]
 \item the vertex set of $H$ is $I$; and
 \item for every two distinct vertices $u,v\in I$ satisfying $\dist(u,v)\leq \sigma r$, in $H$ we add an edge $uv$ of weight $\sigma r$.
 \end{itemize}
 Thus, all edges of $H$ have weight $\sigma r$. 
 


Let us first note that any distance graph derived from a graph of bounded doubling dimension has bounded maximum degree.

\begin{lemma}\label{clm:bdddim_implies_bddegree}
    Let $G$ be a graph of doubling dimension $m$, $I$ be an inclusion-wise maximal distance-$r$ independent set in $G$ for some $r\in \N_{>0}$, and $H$ be the $(I,r,\sigma)$-distance graph of $G$ for some $\sigma$.
    Then $\Delta(H)<2^{\rho m}$, where $\rho = \lfloor \log  \sigma +1 \rfloor$.
\end{lemma}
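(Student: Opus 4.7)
}
The plan is to bound the degree of an arbitrary vertex $u\in I=V(H)$ in $H$ directly from the definitions. By construction of the $(I,r)$-distance graph, the neighbours of $u$ in $H$ are exactly the vertices of $I\setminus\{u\}$ that lie in $\B_G(u,3r)$. Hence $\deg_H(u)=|I\cap \B_G(u,3r)|-1$, and it suffices to show that $|I\cap \B_G(u,3r)|\leq 2^{3m}$.

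To do so, I would cover $\B_G(u,3r)$ by balls small enough that each can contain at most one vertex of $I$, and then count the balls. First, I apply the doubling condition three times in succession: $\B_G(u,3r)$ can be covered by $2^m$ balls of radius $3r/2$; each of those can be covered by $2^m$ balls of radius $3r/4$; and each of those can be covered by $2^m$ balls of radius $3r/8$. Thus $\B_G(u,3r)$ is covered by $2^{3m}$ balls of radius $3r/8$.

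It remains to observe that any such small ball contains at most one element of $I$. Indeed, if $v,w\in I$ both lie in a ball $\B_G(x,3r/8)$, then by the triangle inequality $\dist_G(v,w)\leq 3r/4<r$, contradicting the assumption that $I$ is a distance-$r$ independent set. Combining this with the covering bound from the previous step gives $|I\cap \B_G(u,3r)|\leq 2^{3m}$, hence $\deg_H(u)\leq 2^{3m}-1<2^{3m}$, and the result follows by maximising over $u\in V(H)$.

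I do not foresee any real obstacle here; the argument is essentially a direct unfolding of the definitions. The only point requiring a small amount of care is to iterate the doubling condition sufficiently many times (three suffices, since $3r/8<r/2$) so that the resulting covering balls have diameter strictly smaller than $r$; a single application would only yield balls of radius $3r/2$, which is too coarse to exploit the distance-$r$ independence of $I$.
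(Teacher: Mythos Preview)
Your proposal is correct and follows essentially the same approach as the paper: both arguments cover $\B_G(u,3r)$ by $2^{3m}$ balls of radius $3r/8$ via three applications of the doubling condition, and then observe that each such ball has diameter at most $3r/4<r$ and hence contains at most one point of the distance-$r$ independent set $I$. The only cosmetic difference is that the paper phrases the count in terms of the closed neighbourhood $N_H[u]$ rather than $I\cap\B_G(u,3r)$, which amounts to the same thing.
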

\begin{proof}
    Consider a vertex $u \in I$ and let $N_H[u]$ be the closed neighborhood of $u$ in $H$, i.e., the set comprising $u$ and all its neighbors. Since two vertices $x,y \in I$ are adjacent in $H$ only if $\dist_G(x,y) \leq \sigma r$, it follows that $N_H[u] \subseteq \B_G(u, \sigma r)$.
    Since $G$ has doubling dimension at most $m$, there are $2^{\rho m}$ balls $B_1, \hdots, B_{2^{\rho m}}$ of $X$, each of radius $\sigma r/2^{\rho}$, such that $\B_G(u, \sigma r) \subseteq \bigcup_{i=1}^{2^{\rho m}} B_i$. Observe that for each $B_i$, the maximum distance between any two vertices in $B_i$ is at most 
    \[
        2 \cdot \frac{\sigma r}{2^\rho} = \frac{\sigma r}{2^{\rho-1}} \leq r.
    \]
    Since the vertex set of $H$ is a distance-$r$ independent set in $G$, we have $\dist_G(x,y) > r$ for any distinct $x,y \in N_H[u]$, so each ball $B_i$ contains at most one vertex of $N_H[u]$. As their union contains every vertex of $N_H[u]$, we conclude that $|N_H[u]| \leq 2^{\rho m}$, hence the degree of $u$ is strictly smaller than $2^{\rho m}$.
\end{proof}

We now show that every graph is suitably quasi-isometric to any its distance graph.

\begin{lemma}\label{lem:quasi_isometry_to_derived_graph}
    Suppose that $G$ is a graph, $I$ is an inclusion-wise maximal distance-$r$ independent set in $G$ for some $r\in \N_{>0}$, and $H$ is the $(I,r,\sigma)$-distance graph of $G$ for some $\sigma \geq 3$.
    For every $u\in V(G)$, let $\varphi(u)$ be an arbitrary vertex of $I$ such that $\dist_G(u,\varphi(u))\leq r$ (such a vertex exists by the maximality of $I$).
    Then, for any $u,v \in V(G)$, we have 
    \[\dist_G(u,v) - 2 r \leq \dist_H(\varphi(u), \varphi(v)) \leq \sigma \cdot \dist_G(u, v) + \sigma r.\]
    In particular, $\varphi$ is a $(\sigma,\sigma r)$-quasi-isometry from $G$ to $H$.    
\end{lemma}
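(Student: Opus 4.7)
The plan is to verify the three conditions in the definition of a quasi-isometry separately, namely the upper distance bound, the lower distance bound, and the ``almost-surjectivity'' of $\varphi$. The easy warm-up is the surjectivity condition: every $w\in V(H)$ lies in $I\subseteq V(G)$, and by definition $\varphi(w)\in I$ with $\dist_G(w,\varphi(w))\leq r$, so either $\varphi(w)=w$ or $\{w,\varphi(w)\}$ is an edge of $H$ (since then $\dist_G(w,\varphi(w))\leq r\leq 3r$). In both cases $\dist_H(w,\varphi(w))\leq 3r$, which is even stronger than needed.

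For the upper bound $\dist_H(\varphi(u),\varphi(v))\leq 3\dist_G(u,v)+3r$, I would take a shortest $u$--$v$ path $P$ in $G$, of length $d\coloneqq \dist_G(u,v)$, and sample along it a sequence $u=w_0,w_1,\ldots,w_t=v$ with $t=\lceil d/r\rceil$ and $\dist_G(w_i,w_{i+1})\leq r$ for each $i$. Then by the triangle inequality and $\dist_G(w_i,\varphi(w_i))\leq r$, the images satisfy $\dist_G(\varphi(w_i),\varphi(w_{i+1}))\leq 3r$, so consecutive $\varphi(w_i)$'s are either equal or adjacent in $H$. Summing the weights of the at most $t$ resulting $H$-edges yields $\dist_H(\varphi(u),\varphi(v))\leq 3r\cdot\lceil d/r\rceil\leq 3d+3r$, as required.

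For the lower bound $\dist_H(\varphi(u),\varphi(v))\geq \tfrac{1}{3}\dist_G(u,v)-3r$, I would argue in the opposite direction. Since every edge of $H$ has weight exactly $3r$ and corresponds to a pair of $I$-vertices at $G$-distance at most $3r$, any $H$-path $\varphi(u)=x_0,x_1,\ldots,x_k=\varphi(v)$ of total weight $3rk$ translates, by concatenating shortest $G$-paths between consecutive $x_i$'s, into a $G$-walk from $\varphi(u)$ to $\varphi(v)$ of length at most $3rk=\dist_H(\varphi(u),\varphi(v))$. Combining this with $\dist_G(u,\varphi(u))\leq r$ and $\dist_G(\varphi(v),v)\leq r$ through the triangle inequality gives $\dist_G(u,v)\leq \dist_H(\varphi(u),\varphi(v))+2r$, which rearranges to a bound even stronger than $\dist_H(\varphi(u),\varphi(v))\geq \tfrac{1}{3}\dist_G(u,v)-3r$.

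There is no real obstacle here; the proof is essentially an exercise in the triangle inequality, and the only tiny subtlety is the rounding in the sampling step for the upper bound (which is absorbed into the additive $3r$ term) and checking that the lower bound statement remains valid in the degenerate case $\varphi(u)=\varphi(v)$, where the triangle inequality already forces $\dist_G(u,v)\leq 2r$.
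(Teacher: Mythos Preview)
Your proof is correct and follows essentially the same approach as the paper: both sample a shortest $G$-path at spacing $r$ to bound $\dist_H$ from above, and both use that every $H$-edge corresponds to $G$-distance at most $3r$ (so $\dist_G\leq\dist_H$ on $I$) together with the triangle inequality to bound $\dist_H$ from below. Your treatment of the surjectivity condition is in fact slightly cleaner than the paper's (and one can note that necessarily $\varphi(w)=w$ for $w\in I$, since $I$ is distance-$r$ independent).
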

\begin{proof}
    For the second condition of quasi-isometry, observe that for every $w\in V(H)=I$, we have $\dist_G(w,\varphi(w))\leq \sigma r$. So it remains to the distance bounds. We may assume without loss of generality that $G$ is connected.





    Consider any $u,v\in V(G)$ and let $P$ be a shortest path connecting $u$ and $v$. Along $P$ we may choose vertices $x_0, x_1, \dots, x_k, x_{k+1}$ in order so that $x_0=u$, $x_{k+1}=v$ and
    \begin{itemize}[nosep]
        \item for each $i\in \{0,1\dots, k-1\}$ we have $\dist_G(x_i, x_{i+1})=r$, and
        \item $\dist_G(x_k, x_{k+1})\leq r$.
    \end{itemize}
    Note that $\sum_{i=0}^k\ \dist_G(x_i, x_{i+1}) = \dist_G(u,v)$ and we have $kr<\dist_G(u,v) \leq (k+1)r$.

    For each $i\in\{0,1,\dots, k+1\}$, let $v_i=\varphi(x_i)\in I$. Note that $\dist_G(x_i,v_i)\leq r$ by construction. 
    We claim that for every $i\in \{0,1,\ldots,k\}$, the vertices $v_i$ and $v_{i+1}$ are adjacent in the graph $H$. 
    Indeed, $\dist_G(v_i, v_{i+1})\leq \dist_G(v_i, x_i) + \dist_G(x_i, x_{i+1}) + \dist_G(x_{i+1}, v_{i+1}) \leq 3r \leq \sigma r$ by the triangle inequality, so we have $v_iv_{i+1}\in E(H)$. 
    Consequently, $v_0,\ldots,v_{k+1}$ is a walk that connects $\phi(u) = v_0$ and $\phi(v)=v_{k+1}$ in $H$.
    Recalling that each edge in $H$ has weight $\sigma r$, we may now estimate $\dist_H(\varphi(u), \varphi(v))$ as follows:
    \[\dist_H(\varphi(u), \varphi(v))=\dist_H(v_0, v_{k+1})\leq \sum_{i=0}^k\ \dist_H(v_i, v_{i+1}) \leq \sigma r\cdot(k+1)\leq \sigma \cdot \dist_G(u, v) + \sigma r.\]
    
    Next, since $\dist_G(x, y) \leq \sigma r$ for each edge $xy \in E(H)$ and every edge of $H$ has weight $\sigma r$, it follows that $\dist_G(x,y) \leq \dist_H(x,y)$ for all $x,y \in I$. Now, we can estimate $\dist_G(u,v)$:
    \[\dist_G(u,v) \leq \dist_G(u, \varphi(u)) + \dist_G(\varphi(u), \varphi(v)) + \dist_G(\varphi(v), u) \leq r + \dist_H(\varphi(u), \varphi(v)) + r.\]
    Thus, we get the following inequalities:
    \[\dist_G(u,v) - 2r \leq \dist_H(\varphi(u), \varphi(v)) \leq \sigma \cdot \dist_G(u, v) + \sigma r,\]
    which means that also the following is true
    \[\tfrac{1}{\sigma }\cdot \dist_G(u,v) - \sigma r \leq \dist_H(\varphi(u), \varphi(v)) \leq \sigma \cdot \dist_G(u, v) + \sigma r.\]
    In particular, we conclude that $\varphi$ is indeed a $(\sigma ,\sigma r)$-quasi-isometry.
\end{proof}

As $\sigma=3$ is the smallest value of $\sigma$ for which \cref{lem:quasi_isometry_to_derived_graph} holds, we will use 3 in the further definitions of distance graphs.
  
 \section{Bounded doubling dimension}

In this section we prove \Cref{thm:equivalences}. We start with the following lemma that encapsulates the key observation: in the presence of a bound on the doubling dimension, the distance-$r$ balanced separators in $G$ can be translated into balanced separators in the corresponding distance graph.

 \begin{lemma}\label{lem:bdtw}
     Let $G$ be a graph with doubling dimension at most $m$, for some $m\in \N$. Suppose that for some $r\in \N_{>0}$, the graph $G$ has distance-$r$ balanced separator number at most $k$. Then, for every inclusion-wise maximal distance-$r$ independent set $I$ in $G$, the distance graph $H(G,I,r,3)$ has treewidth at most  $3k\cdot 2^{6m}$.
 \end{lemma}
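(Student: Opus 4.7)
The plan is to invoke \cref{lem:bsn}, which says $\tw(H)\le 3\,\bsn(H)$, and reduce the problem to showing the distance-$0$ balanced separator bound $\bsn(H)\le k\cdot 2^{3m}$; this will imply $\tw(H)\le 3k\cdot 2^{3m}\le 3k\cdot 2^{6m}$, as required. So I fix an arbitrary weight function $\nu\colon V(H)\to\R_{\ge 0}$ and aim to produce a balanced separator of size at most $k\cdot 2^{3m}$ in $H$.

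First, I lift $\nu$ to a weight function on $V(G)$ by setting $\mu(v)=\nu(v)$ for $v\in I$ and $\mu(u)=0$ for $u\in V(G)\setminus I$; notice $\mu(V(G))=\nu(V(H))$. By the distance-$r$ balanced separator hypothesis applied to $\mu$, there exist centers $c_1,\dots,c_k\in V(G)$ and a balanced separator $X\subseteq V(G)$ for $\mu$ with $X\subseteq \bigcup_{i=1}^k \B_G(c_i,r)$. The natural candidate balanced separator for $\nu$ in $H$ is
\[
Y \coloneqq \{v\in I : \dist_G(v, X) \leq 3r\}.
\]

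For the separation property I claim that every connected component $C$ of $H-Y$ is contained in a single connected component of $G-X$. Take $u,v$ lying in the same component of $H-Y$, connected by a walk $u=w_0,w_1,\dots,w_\ell=v$ in $H-Y$. For each edge $w_iw_{i+1}$, the definition of the distance graph gives $\dist_G(w_i,w_{i+1})\le 3r$, so I can lift it to a path $P_i$ of length at most $3r$ in $G$. Since $w_i\notin Y$, we have $\dist_G(w_i,X)>3r$, and therefore every vertex $z$ of $P_i$ satisfies $\dist_G(z,X)\ge \dist_G(w_i,X)-\dist_G(w_i,z)>0$, so $z\notin X$. Concatenating the paths $P_0,\dots,P_{\ell-1}$ connects $u$ and $v$ in $G-X$. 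Consequently $C$ is contained in a single connected component $D$ of $G-X$, and since $\nu(C)=\mu(C)\le \mu(D)\le \tfrac12\mu(V(G))=\tfrac12\nu(V(H))$, the separator $Y$ is balanced for $\nu$.

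For the size bound, note that every $v\in Y$ is within distance $3r$ of some $x\in X$ which is within distance $r$ of some $c_i$, so $Y\subseteq \bigcup_{i=1}^k \B_G(c_i, 4r)$. By three applications of the doubling dimension assumption, each ball $\B_G(c_i,4r)$ is coverable by $2^{3m}$ balls of radius $r/2$. Since $I$ is distance-$r$ independent, any two vertices of $I$ are at distance strictly greater than $r$, while any two points in a common radius-$r/2$ ball are at distance at most $r$; hence each such ball contains at most one vertex of $I$. Summing over the $k$ big balls yields $|Y|\le k\cdot 2^{3m}$, completing the proof.

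The whole argument is essentially routine modulo one calibration: the choice of the $3r$-threshold in the definition of $Y$ is precisely what is needed so that walks in $H-Y$ lift to walks in $G$ that avoid $X$, matching the $3r$ length bound coming from the definition of the edges of $H$. I do not anticipate any step being a substantial obstacle.
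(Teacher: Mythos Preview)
Your proof is correct and follows the same overall strategy as the paper: lift the weight function from $H$ to $G$, apply the distance-$r$ balanced separator hypothesis to obtain a $(k,r)$-coverable separator $X$ in $G$, thicken it to a separator in $H$, and verify balance and size. The execution differs in useful ways. The paper defines its separator in $H$ via second neighborhoods in $H$ of vertices of $I$ near the ball centers, then bounds its size by invoking the degree bound $\Delta(H)<2^{3m}$ (\cref{clm:bdddim_implies_bddegree}), obtaining $|X_H|\le k\cdot 2^{6m}$; the separation argument then needs a somewhat delicate choice of intermediate points $z,z'\in I$. Your definition $Y=\{v\in I:\dist_G(v,X)\le 3r\}$ works purely in $G$, which makes the separation argument a direct path-lifting, and your size bound applies the doubling property directly to conclude $|Y|\le k\cdot 2^{3m}$, actually improving the final bound to $\tw(H)\le 3k\cdot 2^{3m}$. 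Both arguments are sound; yours is shorter and a bit sharper.
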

\begin{proof}
    Denote $H=H(G,I,r,3)$ for brevity. By \cref{lem:bsn}, it suffices to prove that the balanced separator number of $H$ is bounded by $k\cdot 2^{6m}$. 
    Consider any weight function $\mu_H\colon V(H)\to \R_{\geq 0}$ on $H$. Let $\mu_G\colon V(G) \to \R_{\geq 0}$ be the weight function on $G$ defined by setting $\mu_G(u)=\mu_H(u)$ for all $u\in I$, and $\mu_G(u)=0$ for all $u\notin I$. Since $G$ has distance-$r$ balanced separator number at most $k$, there exist radius-$r$ balls $B_1,B_2,\dots, B_{k}$ in $G$ such that denoting $X\coloneqq \bigcup_{i=1}^{k} B_i$, every connected component of $G-X$ has weight at most~$\frac{1}{2}\mu_G(V(G))$.

    Let $u_1, u_2, \dots, u_{k}$ be the centers of balls $B_1,B_2,\ldots,B_k$, respectively. Since $I$ is an inclusion-wise maximal distance-$r$ independent set, we can find vertices $v_1,\ldots,v_r\in I$ such that $\dist_G(u_i,v_i)\leq r$, for each $i\in \{1,\ldots,k\}$. Further, let $N_H^2[v_i]$ be the second neighborhood of $v_i$ in $H$: the set consisting of $v_i$, the neighbors of $v_i$, and the neighbors of those neighbors.
    We define
    \[X_H\coloneqq \bigcup_{i=1}^k N_H^2[v_i]\subseteq V(H)=I.\]
    By \cref{clm:bdddim_implies_bddegree}, $\Delta(H)<2^{3 m}$. It follows that \[|N_H^2[v_i]|\leq 1+\Delta(H)+\Delta(H)\cdot (\Delta(H)-1)=1+(\Delta(H))^2\leq 2^{6 m},\quad\textrm{for each }i\in \{1,\ldots,k\}.\]
    Therefore, we have $|X_H|\leq k\cdot 2^{6m}$.
    
    We will show that $X_H$ is a balanced separator for~$\mu_H$.
    Note that since $X$ is a balanced separator for $\mu_G$, it suffices to prove the following claim: every two vertices $u,v\in I\setminus X_H$ that are in different components of $G-X$, are also in different components of~$H-X_H$.

    Consider any path $Q$ connecting $u$ and $v$ in $H$.
    Let $P$ be a walk connecting $u$ and $v$ in $G$ obtained by replacing every edge of $Q$, say $xy$, with a path $R_{xy}$ in $G$ of length at most $3 r$. Such a path exists by the construction of $H$ as it is the $(I,r,3)$-distance graph of $G$.
    Since $u$ and $v$ are in different components of $G-X$, the walk $P$ must intersect $X$. This means that for some edge $xy$ of $Q$ and $i\in \{1,\ldots,k\}$, there exists a vertex $w\in V(R_{xy})$ such that $\dist_G(w,u_i)\leq r$, implying that $\dist_G(w,v_i)\leq 2r$.
    Since the length of $R_{xy}$ is at most $3r$, we have $\dist_G(w,x)\leq \frac{3}{2}r$ or $\dist_G(w,y)\leq \frac{3}{2}r$, implying that $\dist_G(v_i,x)\leq \frac{7}{2}r$ or $\dist_G(v_i,y)\leq \frac{7}{2}r$. Without loss of generality assume the former. Then on a shortest path between $x$ and $v_i$ we may find a vertex $z$ such that $\dist_G(x,z)\leq 2r$ and $\dist_G(v_i,z)\leq 2r$. Since $I$ is an inclusion-wise maximal distance-$r$ independent set, there exists some $z'\in I$ such that $\dist_G(z,z')\leq r$. In particular we have $\dist_G(x,z')\leq 3r$ and $\dist(z',v_i)\leq 3r$, which means that $x,z'$ are equal or adjacent in $H$, and also $z',v_i$ are equal or adjacent in $H$. We conclude that $x\in N_H^2[v_i]$, so $x\in X_H$ and therefore $Q$ intersects $X_H$. As $Q$ was chosen arbitrarily, we conclude that $u$ and $v$ lie in different components of $H-X_H$ and we are~done. 
\end{proof}

\thmequivalences*
\begin{proof}
    \noindent{\bf{\ref{pr:tpw}$\Rightarrow$ \ref{pr:tw}.}} We use the standard construction sketched in \cref{sec:prelim}. Consider any $G\in \Cc$ and let $(T,\bag)$ be a tree-partition of $G$ with $(k_1,r)$-coverable bags (we will not use the assumption about the spread or the maximum degree). We construct a new tree $T'$ by subdividing every edge of $T$, say $xy$, with a new vertex~$z_{xy}$. On $T'$ we define a new bag function $\bag'$ as follows: $\bag'(x)=\bag(x)$ for each $x\in V(T)$ and $\bag'(z_{xy})=\bag(x)\cup \bag(y)$ for each $xy\in E(T)$. It is straightforward to see that $(T',\bag')$ is a tree decomposition of $G$, and its bag are $(k_2,r)$-coverable, where $k_2\coloneqq 2k_1$.

    \bigskip

    \noindent{\bf{\ref{pr:tw}$\Rightarrow$ \ref{pr:bsn}.}}
     Again, we use the standard argument of finding a balanced bag in a tree decomposition. Consider a graph $G\in \Cc$ and let
     $(T,\bag)$ be a tree decomposition of $G$ with $(k_2,r)$-coverable bags.  
    Consider also any weight function $\mu\colon V(G)\to \R_{\geq 0}$. We orient the edges of $T$ according to $\mu$ as follows. Consider an edge $xy$ of $T$; then removal of $xy$ from $T$ disconnects $T$ into two subtrees, say $T_x$ containing $x$ and $T_y$ containing $y$. If $\mu(\bigcup_{x'\in V(T_x)} \bag(x)) < \mu(\bigcup_{y'\in T_y} \bag(y))$, then orient $xy$ towards~$y$, and if $\mu(\bigcup_{x'\in V(T_x)} \bag(x)) > \mu(\bigcup_{y'\in T_y} \bag(y))$, then orient $xy$ towards $x$; in case of a tie, orient $xy$ in any way.
    Since every orientation of a tree has a sink, there exists a node $x\in V(T)$ such that all edges incident to $x$ are oriented towards $x$.
    Then each component of $G-\bag(x)$ has $\mu$-weight at most~$\frac{1}{2}\mu(V(G))$, as otherwise the edge connecting $x$ with the tree of $T-x$ containing this component would need to be oriented outwards from $x$. Since $\bag(x)$ is $(k_2,r)$-coverable, we conclude that $\bag(x)$ is a $(k_2,r)$-coverable balanced separator for $\mu$. And as $\mu$ was chosen arbitrarily, $G$ has distance-$r$ separation number bounded by $k_3\coloneqq  k_2$.

    \bigskip

    \noindent{\bf{\ref{pr:bsn}$\Rightarrow$ \ref{pr:qi}.}}
    Consider a graph $G\in \Cc$.
    Let $I$ be an inclusion-maximal distance-$r$ independent set in $G$, and let $H\coloneqq H(G,I,r, 3)$ be the $(I,r)$-distance graph of $G$. Then $G$ is $(3,3r)$-quasi-isometric with~$H$ (by \cref{lem:quasi_isometry_to_derived_graph}), and $H$ has maximum degree bounded by $\Delta_4\coloneqq 2^{3m}-1$ (by \cref{clm:bdddim_implies_bddegree}) and tree-partition-width bounded by $k_4\coloneqq \frac{35}{4}\Delta_4\cdot (3k_3\cdot 2^{6m}+1)$ (by \cref{thm:tw-tpw,lem:bdtw}).
    


    \bigskip

    \noindent{\bf{\ref{pr:qi}$\Rightarrow$ \ref{pr:qia}.}} Trivial, take $k_5\coloneqq k_4$, $\Delta_5\coloneqq \Delta_4$, $\alpha\coloneqq 3$, $\beta\coloneqq 3$, and $\gamma\coloneqq 3$.

    \bigskip
    
    \noindent{\bf{\ref{pr:qia}$\Rightarrow$ \ref{pr:tpw}.}} We may assume without loss of generality that $\alpha,\beta\geq 1$. Consider a graph $G\in \Cc$ and let 
    $H$ be an edge-weighted graph with $\tpw(H)\leq k_5$, $\Delta(H)\leq \Delta_5$, and every edge of weight at least $\gamma r$, such that there is an $(\alpha,\beta r)$-quasi-isometry $\varphi \colon V(G)  \to V(H)$.
    Let $\Tt=(T,\bag)$ be a tree-partition of $H$ of width at most $k_5$.
    Note that we may assume that $T$ has maximum degree at most $\Delta_5'\coloneqq k_5\cdot \Delta_5$, for the total number of neighbors of vertices contained in a single bag of $\Tt$ is at most $\Delta_5'$.

    Our goal is to construct a suitable tree-partition of $G$. We first define a new bag function $\bag'$ by naturally pulling $\bag$ through the quasi-isometry $\varphi$:
    \[\bag'(x)\coloneqq \{u\in V(G)\mid \varphi(u)\in \bag(x)\} \qquad \textrm{for each }x\in V(T).\]
        
    We claim that for each $x\in V(T)$, $\bag(x)$ is $(k_5',r)$-coverable, where $k_5'\coloneqq k_5\cdot 2^{m\lceil\log \alpha\beta\rceil}$. For this, observe that if we have any pair of vertices $u,v\in V(G)$ with $\varphi(u)=\varphi(v)$, then 
    \[\tfrac{1}{\alpha}\dist_G(u,v) - \beta r \leq \dist_H(\varphi(u), \varphi(v)) = 0,\qquad\textrm{implying}\qquad \dist_G(u, v)\leq \alpha\beta r.\]

    Therefore, for each $v\in \bag'(x)$ with non-empty $\varphi^{-1}(v)$ we can select any vertex $u_v\in \varphi^{-1}(v)$, and then $\varphi^{-1}(v)\subseteq \B_G(u_v, \alpha\beta r)$.
    Since $\bag(x)$ consists of at most $k_5$ elements, it follows that $\bag'(x)$ is $(k_5,\alpha\beta r)$-coverable. Since $G$ has doubling dimension at most $m$, each radius-$\alpha\beta r$ ball in $G$ can be covered by $2^{m\lceil\log \alpha\beta\rceil}$ radius-$r$ balls. It follows that $\bag'(x)$ is $(k_5',r)$-coverable, as claimed.

    It would be natural to consider $(T,\bag')$ as the sought tree-partition, but it is not clear that this is a tree-partition at all. Instead, we will construct a suitable ``coarsening'' of $(T,\bag')$ as follows; see also \cref{fig:tree}. Set
    \[p\coloneqq \lceil (\alpha+\beta)\gamma^{-1}\rceil.\]
    Root $T$ in an arbitrary node $z$ and let
    \[I\coloneqq \{y\in V(T)\mid \dist_T(y,z)\equiv 0 \bmod p\}.\]
    Here, the edges of $T$ are considered to be of unit length. Further, define
    \[C_z\coloneqq \{x\in V(T)\mid \dist_T(x,z)<2p\},\]
    and for each $y\in I\setminus \{z\}$, 
    \[C_y\coloneqq \{x\in V(T)\mid p\leq \dist_T(x,y)<2p\textrm{ and }\dist_T(x,z)>\dist_T(y,z)\}.\]
    Note that $\{C_y \mid  y \in I\}$ is a partition of $V(T)$. Since $T$ has maximum degree at most $\Delta_5'$, we have
    \begin{equation}\label{eq:wydra}|C_y|\leq 1+\Delta_5'+(\Delta_5')^2+\ldots+(\Delta_5')^{2p-1}\eqqcolon M\qquad\textrm{for each }y\in I.\end{equation}
    Next, define $T''$ to be the tree on the node set $I$ where $y,y'\in I$ are adjacent in $T''$ whenever \[\dist_T(y,y')=p\qquad\textrm{and}\qquad|\dist_T(y,z)-\dist_T(y',z)|=p.\]
    Then, from the construction we immediately obtain the following: 
    \begin{equation}\label{eq:bobr}
        \textrm{for all $x,x'\in V(T)$, if $x\in C_y$ and $x'\in C_{y'}$ with $y\neq y'$ and $yy'\notin E(T')$, then $\dist_T(x,x')>p$.}
    \end{equation}
    Note also that $T''$ has maximum degree bounded by $\Delta_1\coloneqq 1+(\Delta'_5)^p$.

\begin{figure}[t]
    \centering
    \includegraphics[scale=0.72, page=1]{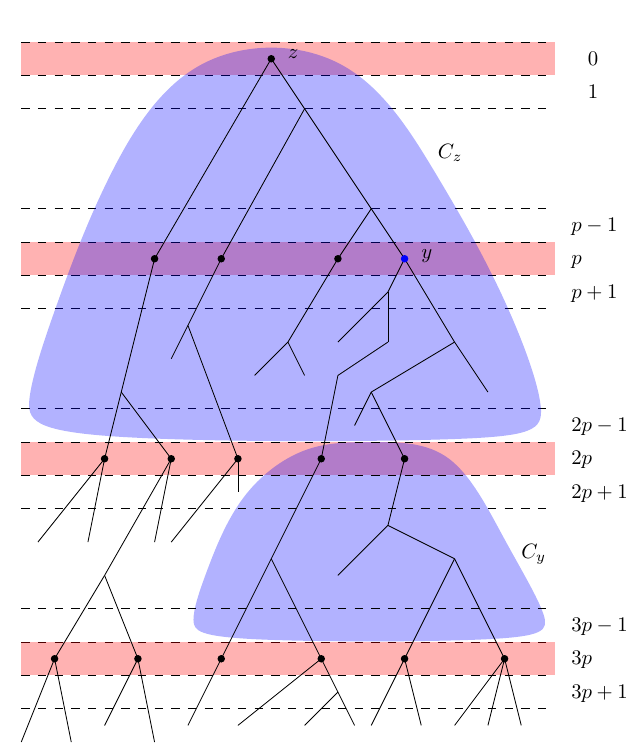}
    \includegraphics[scale=0.72, page=2]{tree.pdf}
    \caption{The construction of $T''$ in the proof of \cref{thm:equivalences}. \textbf{Left:} The definition of the sets $I$ (red), $C_z$, and $C_y$ for a vertex $y \in V(T)$ (both sets blue). \textbf{Right:} The vertices and edges of $T''$.}
    \label{fig:tree}
\end{figure}

    We endow $T''$ with bag function $\bag''$ defined as follows:
    \[\bag''(y)\coloneqq \bigcup_{x\in C_y} \bag'(x),\qquad\textrm{for all }y\in I=V(T'').\]
    Since $\bag'(x)$ is $(k_5',r)$-coverable for each $x\in V(T)$, from \eqref{eq:wydra} it follows that $\bag''(y)$ is $(k_1,r)$-coverable for each $y\in V(T)$, where $k_1\coloneqq M\cdot k_5'$.
    It remains to show that $\Tt''\coloneqq (T'',\bag'')$ is a tree-partition of $G$ of spread $r$.
    
    First, note that $\{\bag'(y) \mid y \in V(T'') \}$ is a partition of $V(G)$.
    Indeed, this follows from the fact that $\varphi(v) \in V(H)$ is  uniquely defined for every $v \in V(G)$ and $\{\bag(x) \mid x \in V(T) \}$ is a partition of $V(H)$.
    Now consider any $u,v\in V(G)$ with $u\in \bag''(y)$ and $v\in \bag''(y')$, where the nodes $y$ and $y'$ are non-equal and non-adjacent in $T''$.
    Let $x,x'\in V(T)$ be such that $u\in \bag'(x)$ and $v\in \bag'(x')$.
    By \eqref{eq:bobr}, we infer that $\dist_T(x,x')>p$.
    Since $u\in \bag'(x)$, we have that $\varphi(u)\in \bag(x)$, and similarly $\varphi(v)\in \bag(x')$.
    Since $\Tt$ is a tree-partition of $H$ and every edge of $H$ has weight at least $\gamma r$, the assertion $\dist_T(x,x')>p$ implies that
    \[\dist_H(\varphi(u),\varphi(v))>p\cdot \gamma r\geq (\alpha+\beta)r.\]
    Recall now that $H$ is an $(\alpha,\beta r)$-quasi-isometry. Hence,
    \[\dist_H(\varphi(u),\varphi(v))\leq \alpha\cdot \dist_G(u,v)+\beta r.\]
    By combining the two inequalities above we conclude that $\dist_G(u,v)>r$, as required.
\end{proof}

Let us stress that in the equivalences provided by \cref{thm:equivalences}, the constants $k_1$, $\Delta_1$, $k_2$, $k_3$, $k_4$, $\Delta_4$, $k_5$,$\Delta_5$, $\alpha$, $\beta$, $\gamma$ can be bounded in terms of each other and of $m$, but the distance parameter $r$ is not involved in those bounds. In other words, the equivalence holds for any choice of the ``scale'' $r\in \N_{>0}$. Also, we note that only implications \ref{pr:bsn}$\Rightarrow$ \ref{pr:qi} and \ref{pr:qia}$\Rightarrow$ \ref{pr:tpw} make use of the assumption that the doubling dimension is bounded.

\section{General case}

In this section we prove \cref{thm:tdecomp-simple,thm:tdecomp}. In both cases, we will construct a suitable tree decomposition explicitly, using a recursive procedure similar to the one used in the classic algorithms for constructing tree decompositions of graphs, see e.g.~\cite[Section~7.6]{platypus}. \cref{thm:tdecomp-simple} is very simple: we just iteratively break the graph using balanced separators, accumulating them on the way throughout $\log n$ levels of recursion. The proof of \cref{thm:tdecomp} is much more intricate: the recursion keeps track of a separator that can be covered only by a constant number of balls, but the radii of those balls will grow (very slowly) during the recursion.

To facilitate the description of our recursive procedures, we need the following definition of a partial tree decomposition that encapsulates the task of decomposing a subgraph of the given graph. Let $G$ be a graph, $S$ be a subset of vertices of $G$, and $U$ be the vertex set of a connected component of $G - S$. Then a \emph{partial tree decomposition} of $(S, U)$ is a tree decomposition $\Tt$ of $G[S\cup U]$ with the following additional property: there is a bag of $\Tt$ that contains the whole $S$.

In the description we will use the Iverson notation: for a condition $\psi$, $[\psi]$ is equal to $1$ if $\psi$ is true, and $0$ otherwise.

\subsection{Superconstant ball count}

We first prove \cref{thm:tdecomp-simple}.
\thmdecompsimple*

The construction of the required tree decomposition is encapsulated in the following~lemma.

\begin{lemma}\label{lem:tdecomp_step_simple}
    Fix $r\in \N_{>0}$.
    Let $G$ be a graph with distance-$r$ balanced separator number at most $k$, let $S$ be a set of vertices of $G$, and let $U$ be the vertex set of a connected component of $G - S$. Suppose $S$ is $(\ell,r)$-coverable for some $\ell\in \N$, and $|U| \leq 2^m$ for some $m \in \N$. Then there exists a partial tree decomposition of $(S,U)$ whose every bag is $(\ell+k(m+1),r)$-coverable. 
\end{lemma}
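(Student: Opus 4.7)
I will prove the lemma by induction on $m$. In the base case, $m=0$ forces $|U|\leq 1$, and since $U$ is a connected component we may assume $|U|=1$; then the single bag $S\cup U$ is $(\ell+1,r)$-coverable, and $\ell+1\leq \ell+k\cdot(m+1)$ since $k\geq 1$ (a quick sanity check: if $k=0$, applying the distance-$r$ balanced separator condition to the $0/1$ weighting supported on $U$ would force the empty set to separate $U$ into pieces of size $\leq |U|/2$, impossible for $|U|\geq 1$).

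\textbf{Inductive step.} For $m\geq 1$, the plan is to apply the distance-$r$ balanced separator condition to the weight function $\mu(v)=[v\in U]$. This produces a $(k,r)$-coverable set $X\subseteq V(G)$ such that every connected component of $G-X$ contains at most $|U|/2$ vertices of $U$. Define the enlarged separator $\widetilde S \coloneqq S\cup X$, which is $(\ell+k,r)$-coverable by combining the two ball covers. Let $C_1,\ldots,C_t$ be the connected components of $G-\widetilde S$ that are contained in $U$. Since each $C_i\subseteq U\setminus X$ is contained in some component of $G-X$, we have $|C_i|\leq |U|/2 \leq 2^{m-1}$. For each $i$, I will invoke the induction hypothesis with the same graph $G$, the separator $\widetilde S$ (which is $(\ell+k,r)$-coverable), the component $C_i$, and the exponent $m-1$. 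This yields a partial tree decomposition $\Tt_i$ of $(\widetilde S, C_i)$ whose every bag is $((\ell+k)+k\cdot m, r) = (\ell+k(m+1), r)$-coverable.

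\textbf{Gluing.} The plan is to build the final decomposition $\Tt$ by creating a new root node with bag $R\coloneqq \widetilde S$ and attaching, for each $i\in\{1,\ldots,t\}$, the decomposition $\Tt_i$ by an edge from the root to the node of $\Tt_i$ whose bag contains all of $\widetilde S$ (such a node exists by the definition of a partial tree decomposition). The root bag $R$ is $(\ell+k,r)$-coverable, hence $(\ell+k(m+1),r)$-coverable, so all bags of $\Tt$ satisfy the required bound. Since $R\supseteq S$, the result will be a partial tree decomposition of $(S,U)$ provided $\Tt$ is a genuine tree decomposition of $G[S\cup U]$.

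\textbf{Verification.} The main routine task is to check this last point. Every vertex of $S\cup U$ lies in some bag: vertices of $\widetilde S$ lie in $R$, while a vertex of $U\setminus X$ lies in some $C_i$ and hence in $\Tt_i$. Every edge of $G[S\cup U]$ is covered: edges within $\widetilde S$ sit in $R$; an edge $uv$ with $u\in \widetilde S$ and $v\in U\setminus X$ forces $v\in C_i$ for the unique $i$ with $u$ adjacent to $C_i$, so $uv$ lies in $G[\widetilde S\cup C_i]$ and is covered by $\Tt_i$; edges between distinct $C_i,C_j$ cannot exist because $\widetilde S$ separates them. Finally, for every vertex $v\in S\cup U$, the set of bags containing $v$ forms a subtree: for $v\in \widetilde S$ the root $R$ and the gluing bag of every $\Tt_i$ containing $v$ are adjacent, so the subtrees in the individual $\Tt_i$'s (which contain the gluing bag by virtue of containing all of $\widetilde S$) merge through $R$; for $v\in C_i$ the bags containing $v$ form a subtree of $\Tt_i$ alone. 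I expect no genuine obstacle here — the only place where care is required is to confirm that the $C_i$ are exactly the components of $G-\widetilde S$ sitting inside $U$ (which uses that $U$ was already a component of $G-S$, so $\widetilde S$ separates $U$ from $V(G)\setminus(S\cup U)$), and to track that each recursive call adds exactly $k$ balls to the $(\ell+k,r)$ count.
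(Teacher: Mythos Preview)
Your proof is correct and follows essentially the same recursive scheme as the paper (apply a balanced separator to the indicator of $U$, recurse on the resulting pieces with the enlarged separator, and glue at a root bag containing the separator). The only cosmetic difference is that you induct on $m$ while the paper inducts on $|U|$; the one technical nit is that your $\widetilde S=S\cup X$ may contain vertices of $X$ lying outside $S\cup U$, so strictly your $\Tt$ is a tree decomposition of $G[S\cup U\cup X]$ rather than of $G[S\cup U]$ --- the paper sidesteps this by using $S\cup(Z\cap U)$ as the root bag, and in your version intersecting every bag with $S\cup U$ (or taking $\widetilde S=S\cup(X\cap U)$ from the start) fixes it without affecting coverability.
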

\begin{proof}
    We proceed by induction on $|U|$. The case $|U|=0$ holds vacuously, as we assume $U$ to be a (non-empty) connected component.

    Let us move on to the induction step. Consider the following weight function $\mu\colon V(G)\to \R_{\geq 0}$: for $u\in V(G)$, we set $\mu(u) \coloneqq [u\in U]$. Since $G$ has distance-$r$ balanced separator number bounded by $k$, we can find a $(k,r)$-coverable set $Z\subseteq V(G)$ that is a balanced separator for $\mu$. This means that every connected component of $G-Z$ contains at most $|U|/2$ vertices of $U$.

    Let $\Aa$ be the family of the vertex sets of all connected components of $G[U]-Z$. Since every $A\in \Aa$ is entirely contained in one connected component of $G-Z$, we have $|A|\leq |U|/2\leq 2^{m-1}$. Noting that $S\cup Z$ is $(\ell+k,r)$-coverable, we may apply the induction assumption to the pair $(S\cup Z,A)$ for each $A\in \Aa$, thus obtaining a partial tree decomposition $\Tt_A$ of $(S\cup (Z\cap U),A)$ satisfying the following:
    \begin{itemize}[nosep]
        \item $\Tt_A$ has a node, say $x_A$, whose bag contains $S\cup (Z\cap U)$; and
        \item every bag of $\Tt_A$ is $((\ell+k)+km,r)$-coverable, hence $(\ell+k(m+1),r)$-coverable.
    \end{itemize}
    We may now combine the tree decompositions $\{\Tt_A \mid A\in \Aa\}$ into a single tree decomposition $\Tt$ by adding a new node $x$ with bag $S\cup (Z\cap U)$, and making $x$ adjacent to all the nodes $\{x_A \mid A\in \Aa\}$. It is straightforward to verify that $\Tt$ is a tree decomposition of $G[S\cup U]$. Also, the bag of $x$ contains $S$, so $\Tt$ is a partial tree decomposition of $(S,U)$. Finally, $S\cup (Z\cap U)$ is clearly $(\ell+k,r)$-coverable, hence every bag of $\Tt$ is $(\ell+k(m+1),r)$-coverable.
\end{proof}

Now, \cref{thm:tdecomp-simple} follows immediately by applying \cref{lem:tdecomp_step_simple} for $S=\emptyset$, $U=V(G)$ (assuming without loss of generality that $G$ is connected), $\ell=0$, and $m=\lceil \log n\rceil$.

\subsection{Superconstant radii}

We now proceed to the proof of \cref{thm:tdecomp}. We will need a few extra definitions. Recall that in the context of \cref{thm:tdecomp}, we assume the existence of balanced separators consisting of $k$ balls of radius~$r$, but to cover the bags of the constructed decomposition, we allow balls of varying radii. We will only use balls of radii being multiples of $r$, so call a set $\Bb$ of balls {\em{round}} if for every ball $B\in \Bb$, the radius of $B$, denoted $\rad(B)$, is a positive integer multiple of $r$. We will use the following {\em{potential}} of $\Bb$ to keep track of the growth of radii: \[\Phi(\Bb) \coloneqq \sum_{B \in \Bb} 2^{\rad(B)/r}.\]
Note that these definitions depend on the radius parameter $r\in \N_{>0}$, fixed in the context.

Also, we set
\[\Gamma\coloneqq 2000\cdot k^2 \log k.\]
This will be the bound on the number of balls needed to cover every separator of the constructed tree decomposition. Again, this definition depends on the parameter $k\in \N_{>0}$ fixed in the context.

With these definitions in place, our recursive procedure can be captured by the lemma below.

\begin{lemma}\label{lem:tdecomp_step}
    Fix $r\in \N_{>0}$.
    Let $G$ be a graph with distance-$r$ balanced separator number at most $k$, let $S$ be a set of vertices of $G$, and let $U$ be the vertex set of a connected component of $G - S$. Let $\Bb$ be a round set of at most $\Gamma$ balls whose union contains $S$. Suppose $|U| \leq 2^m$ for some $m \in \N$. Then there exists a partial tree decomposition $\Tt$ of $(S, U)$ such that each bag of $\Tt$
    can be covered with a round set of balls of size at most $\Gamma+2k$ and potential at most $\Phi(\Bb) + 4k(m+1)$.
\end{lemma}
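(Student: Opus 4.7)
The plan is to proceed by strong induction on $|U|$; the base case $|U|=0$ is vacuous since $U$ is required to be a non-empty connected component. For the inductive step, I first invoke the distance-$r$ balanced separator hypothesis on the weight function $[v\in U]$ to obtain a round family $\mathcal{Z}$ of $k$ balls of radius $r$ whose union $Z$ is a balanced separator, so every connected component of $G[U] - Z$ has at most $|U|/2 \leq 2^{m-1}$ vertices. Set $\mathcal{B}^+ \coloneqq \mathcal{B}\cup\mathcal{Z}$, a round family covering $S\cup Z$ of size at most $\Gamma+k\leq \Gamma+2k$ and potential $\Phi(\mathcal{B})+2k$. I use $\mathcal{B}^+$ to cover the bag of the tree-decomposition node placed at the root, which already sits within the bag-coverability bound.

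Before recursing, I must \emph{compress} $\mathcal{B}^+$ into a round family $\mathcal{B}'$ of at most $\Gamma$ balls still covering $S\cup Z$, so that $\Phi(\mathcal{B}')\leq\Phi(\mathcal{B}^+)+2k = \Phi(\mathcal{B})+4k$. Granted this compression, the induction hypothesis applied to each connected component $A$ of $G[U]-Z$ (with parameter $m-1$ and ball cover $\mathcal{B}'$ of the updated separator $S\cup Z$) yields partial tree decompositions $\Tt_A$ whose bags are coverable by round families of size $\leq \Gamma+2k$ and potential at most $\Phi(\mathcal{B}')+4km \leq \Phi(\mathcal{B})+4k(m+1)$. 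I would then stitch the $\Tt_A$ into the sought partial tree decomposition of $(S,U)$ by introducing a new root node with bag $S\cup Z$ (covered by $\mathcal{B}^+$) adjacent to the root of each $\Tt_A$; routine verification confirms the tree decomposition axioms and the bag bounds.

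The crux of the proof, and what I expect to be the main obstacle, is the compression lemma. The central observation is: two balls $\B(u,s)$ and $\B(v,s)$ of equal radius with $\dist(u,v)\leq r$ can be replaced by the single ball $\B(v,s+r)$, preserving coverage and, crucially, with the potential contribution unchanged since $2\cdot 2^{s/r} = 2^{(s+r)/r}$. I call this a \emph{free merge}. I first exhaust all such free merges, obtaining a family $\mathcal{B}^*$ of no larger potential than $\mathcal{B}^+$ in which, within each radius class, centers are pairwise at distance $>r$, i.e.\ form a distance-$r$ independent set. If $|\mathcal{B}^*|\leq \Gamma$ we are done; otherwise a small number of paid merges is needed.

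To locate paid merges that remain cheap, the idea is to invoke the distance-$r$ balanced separator hypothesis once more, this time on weight functions supported on the ball centers of a fixed radius class. Any separator ball $\hat B$ of radius $r$ that happens to contain $t\geq 2$ centers of radius-$s$ balls lets us absorb those $t$ balls into a single radius-$(s+r)$ ball concentric with $\hat B$, at potential change $(2-t)2^{s/r}\leq 0$ --- free when $t=2$, strictly profitable when $t\geq 3$. The difficulty is that a single balanced separator may absorb no centers at all; I would handle this by recursively applying the balanced-separator argument inside each component that the current separator splits off, producing a hierarchical clustering of the centers and a corresponding layered family of aggregation balls, one layer per level of the hierarchy. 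The bound $\Gamma=2000\,k^2\log k$ should then arise from taking $\Theta(\log k)$ levels of this hierarchy, each level contributing at most $k$ balls of the next-larger radius and absorbing a constant fraction of the surplus centers, while the $k^2$ factor accounts for balls added per level times the number of levels. Faithfully tracking this simultaneous evolution of ball counts, radii, and potential, while holding the per-compression potential overhead to $2k$, is where the proof will be most delicate.
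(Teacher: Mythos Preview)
Your inductive skeleton and the root bag are fine, and your free merge is the pairwise case of the paper's crowding operation (merge $2^{\alpha}$ equal-radius balls with centres within $\alpha r$ of a common point into one larger ball at zero potential cost, for $\alpha=2+\lceil\log 2k\rceil$). The genuine gap is the compression lemma. You want a \emph{single} round family $\mathcal B'$ of at most $\Gamma$ balls still covering $S\cup Z$ with potential overhead $\le 2k$, but nothing in the hypotheses forces the balls of $\mathcal B^+$ (or the points of $S\cup Z$) to be anywhere near one another: one can have $\Gamma+k$ radius-$r$ balls whose centres form a distance-$100r$ independent set, and then any round family of $\le\Gamma$ balls covering the same set must contain a ball of radius at least $50r$, whose potential alone dwarfs the budget. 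Your hierarchical-separator idea does not escape this, because a balanced separator on the centres only \emph{partitions} them among components --- it never brings two centres closer together --- while each level of the hierarchy adds $k$ fresh balls and guarantees no absorptions.

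The paper avoids global compression entirely. It takes $Z$ balanced simultaneously for the indicator of $U$ \emph{and} for the indicator of the centre set $O$ of a crowding-free version $\Bb'$ of $\Bb$; hence every component $W$ of $G-Z$ contains at most $\Gamma/2$ centres of $\Bb'$. For the recursion into a component $A\subseteq W$ it passes a \emph{per-component} family $\Bb_W$ consisting of the $\le\Gamma/2$ balls of $\Bb'$ centred in $W$, the at most $4\alpha k\cdot 2^{\alpha}$ balls of $\Bb'$ centred within $\alpha r$ of some separator centre (bounded via the anti-crowding claim), and the $\le 2k$ separator balls inflated to radius $R_W-(\alpha-2)r$, where $R_W$ is the largest radius among the \emph{discarded} balls. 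Correspondingly the recursive separator is shrunk to $S_A=(Z\cap(U\cup S))\cup(S\cap W_A)$, so $\Bb_W$ need only cover $S\cap W_A$, not all of $S$; a short ``no-leaks'' claim shows the inflated separator balls catch whatever the discarded balls would have contributed there, and the inflation is paid for in potential by one discarded ball of radius $R_W$. The idea you are missing is exactly this: rather than merging balls, \emph{discard} per component those centred on the far side of a separator that is balanced with respect to the centres themselves.
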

\begin{proof}
    The proof is by induction on the size of $U$. There is no base of induction needed: the case $|U|=0$ cannot happen, for $U$ is the vertex set of a (non-empty) connected component of $G-S$.

    
    
    We proceed to the induction step. First, we need to massage the ball set $\Bb$ in order to achieve a certain ``sparseness'' property that will be useful later.

    Fix \[\alpha \coloneqq 2 + \ceil{\log 2k}.\] For $\ell \in \N_{>0}$, we shall say that a vertex $x \in V(G)$  is {\em{$\ell$-crowded}} with respect to $\Bb$ if there exist at least $2^{\alpha}$ balls in $\Bb$ of radius exactly $\ell r$ whose centers are at distance at most $\alpha r$ from $x$. Let $\Bb'$ denote the set obtained from $\Bb$ by replacing all such balls with a single ball of radius $r(\ell + \alpha)$ with center at~$x$. Clearly, we have
    \[|\Bb'|<|\Bb|\qquad\textrm{and}\qquad \Phi(\Bb')\leq \Phi(\Bb)-2^\alpha\cdot 2^\ell+2^{\ell+\alpha}=\Phi(\Bb).\]
    Moreover, by triangle inequality, every ball removed from $\Bb$ is entirely covered by the ball added, hence
    \[\bigcup \Bb'\supseteq \bigcup \Bb.\]
    By applying this operation repeatedly as long as there exists an $\ell$-crowded vertex for some $\ell$, we arrive at a round set of balls $\Bb'$ such that 
    \begin{itemize}[nosep]
        \item $|\Bb'| \leq |\Bb|$,
        \item $\Phi(\Bb') \leq \Phi(\Bb)$,
        \item $\bigcup \Bb'\supseteq \bigcup \Bb$, and
        \item no vertex $u\in V(G)$ is $\ell$-crowded with respect to $\Bb'$, for any $\ell\in \N_{>0}$.
    \end{itemize}
    Also, without loss of generality we may assume that no ball in $\Bb'$ is entirely contained in another ball from $\Bb'$, for the smaller ball can be just removed from $\Bb'$ without breaking any of the properties above.

    We observe the following consequence of the construction of $\Bb'$.

    \begin{claim}\label{clm:balls_no_bound}
        For every vertex $u \in V(G)$, there are at most $2\alpha \cdot 2^{\alpha}$ balls in $\Bb'$ whose centers lie at distance at most $\alpha r$ from $u$.
    \end{claim}
    \begin{claimproof}
        Let $\Bb'_u$ be the set of balls from $\Bb'$ whose centers are at distance at most $\alpha r$ from $u$. Pick any two distinct balls $B_1, B_2 \in \Bb'_u$, say with centers $o_1, o_2$  and radii $\ell_1 r, \ell_2 r$, respectively. Suppose for a moment that $\ell_1 \geq \ell_2 + 2 \alpha$. Then for every vertex $v \in B_2$, we have
        \[
            \dist(v, o_1) \leq \dist(v, o_2) + \dist(o_2, u) + \dist(u, o_1) \leq
            (\ell_2 + 2 \alpha) r \leq \ell_1 r,
        \]
        implying $B_2 \subseteq B_1$, which is a contradiction with the construction of $\Bb'$. Similarly, supposition $\ell_2\geq \ell_1+2\alpha$ also leads to a contradiction. Therefore we have $|\ell_1 - \ell_2| < 2 \alpha$. Since $B_1,B_2$ were chosen arbitrarily, we conclude that there are at most $2\alpha$ different radii among the balls of $\Bb'_u$. Since $u$ is not $\ell$-crowded with respect to $\Bb'$ for any $\ell \in \N$, every fixed radius gives rise to at most $2^{\alpha}$ balls of $\Bb'_u$. We conclude that $|\Bb'_u| \leq 2\alpha \cdot 2^{\alpha}$, as claimed.
    \end{claimproof}

    With $\Bb'$ constructed, we proceed with the proof.
    Since the balls of $\Bb'$ are not contained one in another, they have pairwise different centers. Let then $O$ be the set of centers of balls from $\Bb'$.
    We define the following weight functions: for $u\in V(G)$, we set \[\mu_U(u) \coloneqq [u\in U]\qquad\textrm{and}\qquad \mu_O(u) \coloneqq [u \in O].\]
    Since $G$ has distance-$r$ balanced separator number bounded by $k$, we may find sets  $\Dd_U$ and $\Dd_O$ of radius-$r$ balls with $|\Dd_U|,|\Dd_O|\leq k$ such that $\bigcup \Dd_U$ is a balanced separator for $\mu_U$ and $\bigcup \Dd_O$ is a balanced separator for $\mu_O$. Define
    \[\Dd\coloneqq \Dd_U\cup \Dd_O\qquad\textrm{and}\qquad Z\coloneqq \bigcup \Dd.\]
    Observe that $|\Dd|\leq 2k$ and $Z$ is a balanced separator both for $\mu_U$ and for $\mu_O$. 
    In what follows we will define a number of sets of balls, see \cref{fig:balls}.
    
    Let $O_\Dd$ be the set of centers of balls from $\Dd$. Further, let $\wh{\Dd}$ be the set of those balls from $\Bb'$ whose centers lie at distance at most $\alpha r$ from any vertex of~$O_\Dd$. By \cref{clm:balls_no_bound}, we have \[|\wh{\Dd}| \leq 2\alpha \cdot 2^{\alpha} \cdot |O_\Dd| \leq 4\alpha k \cdot 2^{\alpha}.\]

    Let $W$ be the vertex set of any connected component of $G-Z$. Since $Z$ is a balanced separator for $\mu_U$ and for $\mu_O$, we have \[|W \cap U| \leq |U|/2 \leq 2^{m - 1}\qquad\textrm{and}\qquad |W \cap O| \leq |O|/2\leq \Gamma / 2.\] Consider all balls of $\Bb' - \wh{\Dd}$ with centers outside of $W$ and let $R_W$ denote the largest radius among them. We set $R_W \coloneqq (\alpha - 1)r$ in case there are no such balls or all such balls have radius less than $(\alpha - 1)r$. Let $\Dd_W$ be the set of balls obtained from the set $\Dd$ by changing the radius of every ball to $R_W - (\alpha - 2)r$ (note that this value is at least $r$). 
    Next,
let $\Bb_W'$ be those balls from $\Bb'$ whose centers lie in $W$. We~set \[\Bb_W \coloneqq \wh{\Dd} \cup \Dd_W \cup \Bb'_W.\]

\begin{figure}[t]
    \centering
    \includegraphics[width=0.7\linewidth]{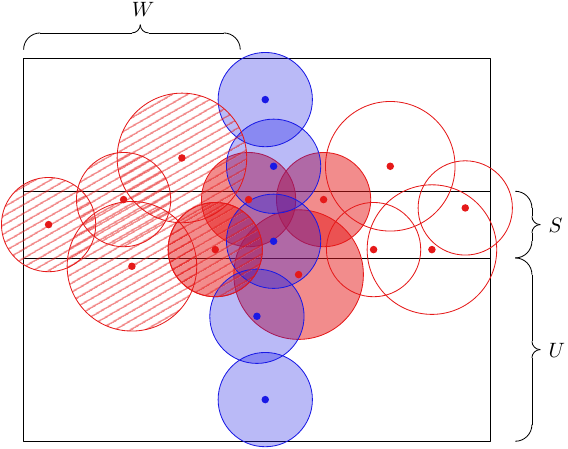}
    \caption{Various sets of balls in the proof of \cref{thm:tdecomp}. Blue disks indicate balls from $\Dd$ and their centers, i.e., the set $O_{\Dd}$, are shown by blue dots.
    Balls  $\Dd_W$ are obtained by (possibly) enlarging balls from $\Dd$, while keeping the same centers; this is not shown in the picture for the sake of clarity.
    Red disks depict balls from $\Bb'$. Balls filled with diagonal lines have their centers in $W$, i.e., they belong to~$\Bb'_W$. Filled red disks indicate the set $\wh{\Dd}$, i.e., their centers are close to the vertices from $O_{\Dd}$.    
    }
    \label{fig:balls}
\end{figure}

 Note that
    \[
    |\mathcal{B}_W| \leq |\wh{\Dd}| + |\Dd_W| + |\mathcal{B}'_W| \leq
    4\alpha k \cdot 2^{\alpha} + 2k + \Gamma/2 \leq \Gamma,
    \]
    where the last inequality can be argued by substituting $\alpha=2+\ceil{\log 2k}$ and $\Gamma=2000\cdot k^2\log k$ followed by direct estimations.

Now let us bound the potential of $\Bb_W$. We consider two cases: either there exists a ball $B \in \Bb' - \wh{\Dd}$ with center outside of $W$ and radius at least $(\alpha - 1) r$, or not. If not, then $\Dd_W=\Dd$ and all the balls of $\Dd_W$ have radius $r$, hence 
    \[
    \Phi(\mathcal{B}_W) \leq \Phi(\wh{\Dd} \cup \mathcal{B}'_W) + \Phi(\Dd_W) \leq \Phi(\Bb') + |\Dd_W|\cdot 2\leq \Phi(\Bb) + 4k.
    \]
    If yes, then since $B \not\in \wh{\Dd} \cup \Bb'_W$, we have
    \[
    \Phi(\Bb_W) \leq \Phi(\wh{\Dd} \cup \Bb'_W) + \Phi(\Dd_W) \leq
    \Phi(\Bb' - \{B\}) + \Phi(\Dd_W) \leq
    \Phi(\Bb') - 2^{R_W/r} + 2k \cdot 2^{R_W/r - (\alpha - 2)}.
    \]
    We have $\alpha - 2 = \ceil{\log 2k}$, so
    \[
    \Phi(\Bb_W) \leq \Phi(\Bb') - 2^{R_W/r} + 2^{R_W/r} \cdot (2k \cdot 2^{-\log 2k}) = \Phi(\Bb')\leq \Phi(\Bb).
    \]

    The intuition of the remainder of the proof is as follows. Recall that $G[U]$ is a connected component of $G-S$, hence the removal of $Z$ breaks $G[U]$ into several smaller components. Each such component $G[A]$ is contained in some component of $G-Z$, say $G[W]$. We would like to apply induction for all components $G[A]$ as above, keeping $\Bb_W$ as the cover for a suitable set $S_A$ separating $A$ from the rest of the graph. However, the construction of $\Bb_W$ used only a subset of the balls from $\Bb'$, so we need to make sure that the part of $S$ contained in $S_A$ is still covered by $\Bb_W$.
    We do this in the following claim.

    
    \begin{claim}\label{clm:no_leaks}
        Let $W$ be the vertex set of a connected component of $G-Z$. Then every ball $B \in \Bb' - \wh{\Dd}$ whose center lies outside of $W$ is disjoint from $W - \bigcup \Dd_W$.
    \end{claim}

    \begin{claimproof}
        Pick any ball $B\in \Bb'-\wh{\Dd}$, say of radius $r' \leq R_W$, and let $o$ be its center; assume $o\notin W$. Pick any vertex $x \in W$ with $\dist(x, o) \leq r'$ and let $P$ be a shortest path connecting $o$ and $x$. Since $o \not\in W$ and $x\in W$, there exists a ball $B' \in \Dd$ which intersects $P$; recall that $B'$ has radius $r$.
        Let $o'$ denote the center of $B'$ and let $z$ be any vertex on $P$ that belongs to $B'$.
        As $P$ is a shortest path, we have that $\dist(o,x) = \dist(o,z) + \dist(z,x)$, and since $z \in B'$, we have $\dist(o',z)\leq r$.
        Since $B \not\in \wh{\Dd}$, we have
        $
        \dist(o, z) \geq \dist(o, o') - \dist(o', z) \geq (\alpha - 1) r,
        $
        hence
        \begin{align*}
        \dist(x, o') \leq \ &  \dist(o', z) + \dist(z, x) = \dist(o', z) + \dist(o, x) - \dist(o,z)\\
        \leq \ &  r + r' - \dist(o, z) \leq  r + R_W - (\alpha - 1)r = R_W - (\alpha - 2)r.
        \end{align*}
        In particular, the ball of radius $R_W - (\alpha - 2)r$ with center at $o'$ both contains $x$ and belongs to $\Dd_W$. So $x\in \bigcup \Dd_W$ are the proof is complete.
    \end{claimproof}

    Let $\Aa$ comprise the vertex sets of all the connected components of $G[U]-Z$.
    For $A\in \Aa$, let $G[W_A]$ be the connected component of $G-Z$ such that $A\subseteq W_A$. We define
    \[S_A \coloneqq (Z \cap (U \cup S)) \cup (S \cap W_A).\]
    Let us first verify that that the ball set $\Bb_{W_A}$ covers $S_A$.
    
    \begin{claim}\label{cl:coverage}
        For every $A\in \Aa$, we have $S_A\subseteq \bigcup \Bb_{W_A}$.
    \end{claim}

    \begin{claimproof}
        Pick any $x \in S_A$. If $x \in Z$, then $x$ is covered by some ball in $\Dd_{W_A}\subseteq \Bb_{W_A}$ (recall that every ball of $\Dd_{W_A}$ is obtained from a ball of $\Dd$ by possibly increasing the radius). 
        Now, assume that $x \in S \cap W_A$, so in particular there exists a ball $B \in \Bb'$ that contains $x$. If the center of $B$ lies in $W_A$, then $B\in \Bb'_{W_A}\subseteq \Bb_{W_A}$ and consequently $x\in \bigcup \Bb_{W_A}$. If $B\in \wh{\Dd}$, then $x\in \bigcup \wh{\Dd}\subseteq \bigcup \Bb_{W_A}$. And if $B\in \Bb'-\wh{\Dd}$ and the center of $B$ lies outside of $W$, then by \cref{clm:no_leaks} we have $x\in \bigcup \Dd_{W_A}\subseteq \bigcup \Bb_{W_A}$.
    \end{claimproof}

    Observe that since $|W\cap U|\leq |U|/2$ for each component $W$ of $G-Z$, we also have $|A|\leq |U|/2\leq 2^{m-1}$ for each $A\in \Aa$. Also, by \cref{cl:coverage} we have that $\Bb_{W_A}$ covers $S_A$, and recall that $|\Bb_{W_A}|\leq \Gamma$ and $\Phi(\Bb_{W_A})\leq \Phi(\Bb)+4k$.
    Therefore, we may apply induction to the pair $(S_A,A)$ for each $A\in \Aa$, thus obtaining a partial tree decomposition $\Tt_A$ of $(S_A,A)$ with the following properties:
    \begin{itemize}[nosep]
        \item $\Tt_A$ has a node, say $x_A$, whose bag contains the whole $S_A$; and
        \item every bag of $\Tt_A$ can be covered by a round set of at most $\Gamma+2k$ balls with potential bounded by
        \[\Phi(\Bb_{W_A})+4km\leq \Phi(\Bb)+4k+4km=\Phi(\Bb)+4k(m+1).\]
    \end{itemize}
    We now combine the decompositions $\{\Tt_A \mid A\in \Aa\}$ into a tree decomposition $\Tt$ of $G[S\cup U]$ by creating a fresh node $x$ with bag $S\cup (Z\cap U)$ and making it adjacent to all the nodes $x_A$ for $A\in \Aa$. It is straightforward to verify that $\Tt$ is a tree decomposition of $G[S\cup U]$; we leave the verification to the reader. Since $S$ is contained in the bag of $x$, $\Tt$ is a partial  tree decomposition of $(S,U)$.

    It only remains to check whether the bag of $x$ --- namely $S\cup (Z\cap U)$ --- can be covered by a round set of at most $\Gamma+2k$ balls with potential bounded by $\Phi(\Bb)+4k(m+1)$. For this, we take $\Bb\cup \Dd$. Note that
    \[\bigcup (\Bb\cup \Dd)=\bigcup \Bb\cup \bigcup \Dd\supseteq S\cup Z\supseteq S\cup (Z\cap U).\]
    Finally, we have
    \[|\Bb\cup \Dd|\leq |\Bb|+|\Dd|\leq \Gamma+2k\quad \textrm{and}\quad \Phi(\Bb\cup \Dd)\leq \Phi(\Bb)+\Phi(\Dd)\leq \Phi(\Bb)+4k\leq \Phi(\Bb)+4k(m+1),\]
    where the pre-last inequality follows from each ball of $\Dd$ having potential $2$.
\end{proof}

Now \cref{thm:tdecomp} follows from an easy application of \cref{lem:tdecomp_step}.

\thmdecomp*
\begin{proof}
    Assuming without loss of generality that $G$ is connected, we apply \cref{lem:tdecomp_step} to $S=\emptyset$ and $U=V(G)$. Thus, we obtain a tree decomposition $\Tt$ of $G$ whose every bag can be covered by a round set of $\Gamma+2k\leq 2002 k^2\log k$ balls whose potential is at most $4k \cdot (\ceil{\log n}+1)$. Let $\Bb$ be any such set and denote $R \coloneqq \max_{B \in \Bb} \rad(B)$. We have
    \[
    2^{R/r} \leq \Phi(\mathcal{B}) \leq 4k \cdot (\ceil{\log n}+1)\leq 12k\log n.
    \]
    Taking a logarithm, we get $R \leq r\cdot (\log k + \log \log n +\log 12)$, which completes the proof.
\end{proof}

\section{\cref{con:main}: case $r=1$ implies the general statement}\label{sec:one_rules}
In this section we prove that if \cref{con:main} holds for $r=1$, then it holds for any value of $r$. To show this,
we use a slight modification of the distance graph described in \cref{sec:quasi-isometries}. 

Let $G$ be a graph, $r, \sigma\in \N_{>0}$, and let $I$ be an inclusion-maximal distance-$r$ independent set in $G$.
The \emph{unweighted $(I,r,\sigma)$-distance graph of $G$} is the graph $H$ defined as follows:
\begin{itemize}
    \item the vertex set of $H$ is $I$;
    \item for every two distinct vertices $u,v\in I$ satisfying $\dist(u,v)\leq \sigma r$, we add an edge $uv$ in $H$.
\end{itemize}

Now we state the analogue of \cref{lem:quasi_isometry_to_derived_graph}. We remark that, compared to  \cref{lem:quasi_isometry_to_derived_graph}, $r$ appears also in the multiplicative factor here, as we work with an unweighted variant of the distance graph.
\begin{lemma}\label{lem:unweighted_distance_graph}
    Suppose that $G$ is a graph, $I$ is an inclusion-wise maximal distance-$r$ independent set in $G$ for some $r\in \N_{>0}$ and $H$ is the unweighed $(I, r, \sigma)$-distance graph of $G$ for some $\sigma\geq 3$.
    For every $u\in V(G)$, let $\varphi(u)$ be an arbitrary vertex of $I$ such that $\dist_G(u,\varphi(u))\leq r$ (such a vertex exists by the maximality of $I$). Then, for any $u,v\in V(G)$, we have 
    \[\frac1{\sigma r}\dist_G(u,v)-2r\leq \dist_H(\varphi(u), \varphi(v))\leq \frac1{(\sigma-2)r} \cdot \dist_G(u,v) + 1.\]
\end{lemma}

The proof of \cref{lem:unweighted_distance_graph} is an easy modification of the proof of \cref{lem:quasi_isometry_to_derived_graph}.
We follow the exact same steps, with the following changes.
First, as the graph $H$ is unweighted, we need to divide by $\sigma r$ in every place we used weights on the edges.
Second, in the proof of the right inequality we use the fact that vertices $v_i, v_{i+(\sigma -2)}$, defined in the one of the steps, are adjacent in $H$.

In order to show that if \cref{con:main} holds for $r=1$, then it holds for every $r>0$, we will need the following lemma. 

\begin{lemma}\label{lem:coverable-separators-translate}
    Let $G$ be a graph, $I$ an inclusion-wise maximal distance-$r$ independent set in $G$ for some $r \in \mathbb{N}_{>0}$, and $H$ the unweighted $(I,r,4)$-distance graph of G. Suppose that $G$ has a $(d, r)$-coverable balanced separator for every weight function. Then, $H$ has a $(d, 1)$-coverable balanced separator for every weight function. 
\end{lemma}
\begin{proof}
Let $\mu$ be a weight function on $V(H)$.
We extend $\mu$ to  $V(G)$ by setting $\mu(v) = 0$ if $v \in V(G) \setminus V(H)$.
Let $S \subseteq V(G)$ be a set of vertices of $G$, which is a balanced separator for $w$ in $G$ and is $(d,r)$-coverable.
Thus there exists a set $X$ of size at most $d$ such that $S\subseteq \bigcup\{\B_G(x,r)\mid x\in X\}$. Let $Y = \{\varphi(x) \mid x \in X\}$. 
Let us denote $\{\B_G(x,r)\mid x\in X\}$ as $\Bb_X$ and $\{\B_H(y,1)\mid y\in Y\}$ as $\Bb_Y$.

\begin{claim}\label{clm:componentscontained}
For every component $D$ of $H - \bigcup\Bb_Y$, there is a component $C$ of $G - \bigcup\Bb_X$ such that $D \subseteq C \cap V(H)$. 	
\end{claim}
\begin{claimproof} 
Let us fix any component $D$ of $H-\bigcup\Bb_Y$.
First, we show that $D \cap \bigcup\Bb_X = \emptyset$. Suppose for a contradiction that there exists $v \in D \cap \bigcup\Bb_X$ for some $x \in X$, so in particular, $\dist_G(v, x) \leq r$. Since $\dist_G(x, \varphi(x)) \leq r$, it follows that $\dist_G(v, \varphi(x)) \leq 2r$, so $v\varphi(x) \in E(H)$. But now $v \in \bigcup\Bb_Y$, contradicting that $v \in D$. Therefore, $D \cap \bigcup\Bb_X = \emptyset$. 

Let $C_1$ and $C_2$ be distinct components of $G - \bigcup\Bb_X$, let $u \in V(H) \cap C_1$, and let $v \in V(H) \cap C_2$.
We claim that if $uv \in E(H)$, then $\{u,v\} \cap \bigcup\Bb_Y \neq \emptyset$.

So suppose that $uv \in E(H)$.
This means that there is a path $P$ from $u$ to $v$ in $G$ of length at most $4r$.
Since $u$ and $v$ are in distinct components of $G - \bigcup\Bb_X$, there is a vertex $x \in X$ such that $\B_G(x,r) \cap P \neq \emptyset$.
Let $z \in \B_G(x,r) \cap P$.
Up to symmetry between $u$ and $v$, assume that $\dist_G(u, z) \leq 2r$.
Now, since $\dist_G(x, \varphi(x)) \leq r$ and $\dist_G(x, z) \leq r$, it follows that
\[
    \dist_G(\varphi(x), u) \leq \dist_G(\varphi(x),x) + \dist_G(x,z) + \dist_G(z,u) \leq  4r.
\]
In particular, $u\varphi(x) \in E(H)$.
But now $\varphi(x) \in Y$, and so $u \in \bigcup\Bb_Y$.

Now, since $D$ is contained in the union of components of $G - \bigcup\Bb_X$, and no edge of $D$ has ends in distinct components of $G-\bigcup\Bb_X$, the claim follows.
\end{claimproof}

Let $D$ be a component of $H - \bigcup\Bb_Y$ and let $C$ be the component of $G - \bigcup\Bb_X$ such that $D \subseteq C \cap V(H)$; its existence is asserted by Claim~\ref{clm:componentscontained}.
It follows that $\mu(D) \leq \mu(C) \leq \mu(H)/2$, where the last inequality holds as $\bigcup\Bb_X\supseteq S$ is a balanced separator for $G$ and weight function $\mu$.
Therefore, $\bigcup\Bb_Y$ is a $(d, 1)$-coverable balanced separator for $\mu$ in $H$.
As the choice of $\mu$ was arbitrary, this completes the proof.
\end{proof}

Now we can proceed to the main theorem of this section. 
\begin{theorem}\label{thm:conequiv}
Suppose that the following holds:
\begin{itemize}
    \item[$(\star)$] For every $d$ there exist $d',s$ such that every unweighted graph that has $(d, 1)$-coverable balanced separators for every weight function admits a tree decomposition whose every bag is $(d', s)$-coverable.
\end{itemize}

Then, for every $d$ and $r$, every graph that has $(d, r)$-coverable balanced separators for every weight function admits a tree decomposition whose every bag is $(d', 4sr+r)$,
where $d'$ and $s$ are as in $(\star$).
\end{theorem}

In other words, this theorem says that if \cref{con:main} holds for $r = 1$ and unweighted graphs, then it holds for all $r > 1$. 

\begin{proof}[Proof of \cref{thm:conequiv}]
Let $G$ be a graph and $r, d$ constants such that $G$ has a $(d, r)$-coverable balanced separator for every weight function. We will show that $G$ admits a tree decomposition whose every bag is $(d', 4sr+3r)$-coverable. 

Let $H$ be the unweighted $(I, r, 4)$-distance graph of $G$. By Lemma \ref{lem:coverable-separators-translate}, $H$ has a $(d, 1)$-coverable balanced separator for every weight function.
Therefore, the assumption~$(\star)$ asserts that $H$ admits a tree decomposition $(T, \beta)$ whose every bag is $(d', s)$-coverable, where $\beta : V(T) \to 2^{V(G)}$ is the bag function.  

For every $t \in V(T)$, define $\beta^*(t) = \{u \in V(G) \mid \varphi(u) \in \beta(t)\}$.
We claim that $(T, \beta^*)$ is a $(d', 4sr + 3r)$-coverable tree decomposition of $G$. First, we show: 

\begin{claim}
$(T, \beta^*)$ is a tree decomposition of $G$. 	
\end{claim}
\begin{claimproof}
	Let $u \in V(G)$. Since $\varphi(u) \in V(H)$ and $(T, \beta)$ is a tree decomposition of $H$, there is $t \in V(T)$ such that $\varphi(u) \in \beta(t)$. Now, $u \in \beta^*(t)$.
    Next, consider $uv \in V(G)$, i.e., $\dist_G(u,v) = 1$. By \cref{lem:unweighted_distance_graph} we have
    \[
    \dist_H(\varphi(u),\varphi(v)) \leq \frac{1}{2r} \dist_G(u,v) +1 < 2.
    \]
    Consequently, either $\varphi(u) = \varphi(v)$ or $\varphi(u)\varphi(v) \in E(H)$.
    In both cases, there exists $t \in V(T)$ such that $\{\varphi(u), \varphi(v)\} \subseteq \beta(t)$.
    Now, $\{u, v\} \subseteq \beta^*(t)$.
    Finally, observe that  for every $u \in V(H)$, we have $\{t \in V(T) \mid u \in \beta^*(t)\} = \{t \in V(T) \mid \varphi(u) \in \beta(t)\}$ and this set is connected as $(T, \beta)$ is a tree decomposition of $H$. 
\end{claimproof}

Next, let $t \in V(T)$. Since every bag of $(T, \beta)$ is $(d', s)$-coverable, there is a set $Y \subseteq V(H)$ of size at most $d'$ such that $\beta(t) \subseteq \bigcup\{\B_H(y,s)\mid y\in Y\}$.
Consider any $u \in \beta^*(t)$. By the definition of $\beta^*$, we have that $\varphi(u) \in \beta(t)$ and thus there is $y \in Y$ such that $\varphi(u) \in \B_H(y,s)$.
This means that $\dist_H(\varphi(u),y) \leq s$ and consequently $\dist_G(\varphi(u),y) \leq 4sr$.
Thus we obtain that 
\[
\dist_G(u,y) \leq \dist_G(u,\varphi(u)) + \dist_G(\varphi(u),y) \leq 4sr+r.
\]
Therefore, $\beta^*(t) \subseteq \bigcup\{\B_G(y,4sr+r)\mid y\in Y\}$.
Since the choice of $t$ was arbitrary, this proves that $(T, \beta^*)$ is a $(d', (4s+1)r)$-coverable tree decomposition of $G$. 
\end{proof}

\bibliographystyle{plain}
\bibliography{bibliography}

\end{document}